\newtheorem{theoremlet}{Theorem}
\newtheorem{thm}{Theorem}[section]
\newtheorem{corolet}[theoremlet]{Corollary}
\newtheorem{coro}[thm]{Corollary}
\newtheorem{lemma}[thm]{Lemma}
\newtheorem{pro}[thm]{Proposition}
\newtheorem*{thmA}{Theorem A}
\newtheorem*{thmB}{Theorem B}
\newtheorem*{corC}{Corollary C}
\newtheorem*{corD}{Corollary D}
\newtheorem*{corE}{Corollary E}
\theoremstyle{definition}
\newtheorem{definition}[thm]{Definition}
\newtheorem{example}[thm]{Example}
\newtheorem{remark}[thm]{Remark}
\newtheorem{quest}[thm]{Question}
\newcommand{\Hmm}[1]{\leavevmode{\marginpar{\tiny%
$\hbox to 0mm{\hspace*{-0.5mm}$\leftarrow$\hss}%
\vcenter{\vrule depth 0.1mm height 0.1mm width \the\marginparwidth}%
\hbox to 0mm{\hss$\rightarrow$\hspace*{-0.5mm}}$\\\relax\raggedright
#1}}}
\begin{document}
\title[Eigenvalue ratios of nonnegatively curved graphs]{Eigenvalue ratios of nonnegatively \\curved graphs}
\author{Shiping Liu}
\address{School of Mathematical Sciences, University of Science and Technology of China, 96 Jinzhai Road, Hefei 230026, Anhui Province, China}
\email{spliu@ustc.edu.cn}
\author{Norbert Peyerimhoff}
\address{Department of Mathematical Sciences, Durham University, DH1 3LE Durham, United Kingdom}
\email{norbert.peyerimhoff@durham.ac.uk}
\begin{abstract}
  We derive an optimal eigenvalue ratio estimate for finite weighted
  graphs satisfying the curvature-dimension inequality
  $CD(0,\infty)$. This estimate is independent of the size of the
  graph and provides a general method to obtain higher order spectral
  estimates. The operation of taking Cartesian products is shown to be
  an efficient way for constructing new weighted graphs satisfying
  $CD(0,\infty)$. We also discuss a higher order Cheeger constant
  ratio estimate and related topics about expanders.
\end{abstract}

\subjclass[2010]{05C50(primary), and 53C23(secondary)} 

\maketitle

\section{Introduction}

\subsection{Some historical background}

Exploring the influence of eigenvalues on graph structures is one of
the central topics in spectral graph theory, see e.g. \cite{AM1985},
\cite{Chung89}, \cite{Chung}, \cite{CGY}, \cite{Mohar91}. In this
area, the first nonzero Laplacian eigenvalue and the Cheeger constant
play a fundamentally important role and their close relations have
found tremendous applications in both theoretical and applied fields
like the study of expander graphs.

Let $G=(V,E)$ be a non-oriented and connected graph with vertex set $V$ and edge set $E$. For
simplicity, we consider in this subsection the special case of the
\emph{normalized} Laplacian $\Delta = D^{-1} A- {\rm{Id}}$ (where $D$
is a diagonal matrix containing the vertex degrees and $A$ is the
adjacency matrix). \emph{Cheeger's isoperimetric constant} is defined
by
\begin{equation} \label{eq:classcheeg}
h(G) = \inf_{\emptyset \neq S \subset V} \frac{|E(S,V \backslash S)|}{\min\{{\mu(S), \mu(V \backslash S)\}}},
\end{equation}
where $E(S_1,S_2)$ is the set of all edges connecting a vertex in
$S_1$ with a vertex in $S_2$ and $\mu(S) = \sum_{x \in S} d_x$, $d_x$
equals vertex degree of $x \in V$. The classical \emph{Cheeger
  inequality} states the following relation between $h(G)$ and the
first nonzero eigenvalue $\lambda_2(G) > 0$ of $-\Delta$:
$$\frac{h^2(G)}{4} \le \lambda_2(G) \le 2h(G). $$
Recently, there were two major developments in this area:
\begin{itemize}
\item higher order Cheeger constants $h_k(G)$ and higher order Cheeger
  inequalities
  \begin{equation} \label{eq:highordcheeg}
  \frac{C h_k^2(G)}{k^4} \le \lambda_k(G) \le
  2h_k(G),
  \end{equation}
  with a universal constant $C > 0$ by Miclo \cite{Miclo2008} and
  Lee, Oveis Gharan and Trevisan \cite{LGT2013}, where $h_2(G)$ agrees with
  the classical Cheeger constant $h(G)$,
\item an improved Cheeger inequality
  \begin{equation} \label{eq:impCheeg}
  h_2(G) \le C k \frac{\lambda_2(G)}{\sqrt{\lambda_k(G)}}
  \end{equation}
  with a universal constant $C > 0$ by Kwok, Lau, Lee, Oveis Gharan
  and Trevisan \cite{KLLGT2013}.
\end{itemize}
\begin{remark}
When the gap between $\lambda_2$ and $\lambda_k$ is large, (\ref{eq:impCheeg}) gives a lower bound of $\lambda_2(G)$ linear in $h_2(G)$. Another such kind of result is due to Miclo \cite{Miclo1999}, whic asserts that
\begin{equation}\label{eq:Miclo}
\frac{h_2(G)}{ {\rm diam}(G)}\leq \lambda_2(G),
\end{equation}
where $ {\rm diam}(G)$ denotes the diameter of the graph $G$.
\end{remark}

In the manifold context, another classical spectral result is Buser's
inequality \cite{Buser82}, providing under the additional assumption of
non-negative Ricci-curvature an estimate of $\lambda_1$ from above by
$h^2$, which depended on the dimension of the manifold. Later, a
dimension independent Buser type estimate was proved by Ledoux \cite{Ledoux04}
in the manifold setting.

To formulate such a result in the graph theoretical context, a
suitable curvature notion for graphs is required. Klartag, Kozma,
Ralli and Tetali \cite{KlarKozma} proved for finite graphs satisfying
the \emph{curvature-dimension condition $CD(0,\infty)$} such a Buser
type inequality:
\begin{equation} \label{eq:Busineq}
\lambda_2(G) \le C d_G h_2^2(G),
\end{equation}
with a universal constant $C > 0$, where $d_G$ denotes the maximal
vertex degree of $G$.

In this article, we combine \eqref{eq:impCheeg} and \eqref{eq:Busineq}
(in the more general setting of weighted graphs) to derive an
eigenvalue ratio result and discuss its optimality. This result has
various applications like higher order Buser estimates, an higher
order eigenvalue-diameter estimate, and higher order Cheeger
constant-ratio estimates. We also provide a discussion of the
underlying curvature notion.

\subsection{The curvature-dimension condition $CD(K,n)$}

This notion goes back to Bakry-{\'E}mery and was first studied by \cite{Schmuckenschlaeger} and \cite{LinYau}.  Since all results in this paper require such a
curvature-dimension condition, we now provide a motivation and brief
introduction into this notion.

In the setting of an $n$-dimensional Riemannian manifold $(M,g)$,
Bochner's formula implies the following inequality relating
Ricci curvature and the Laplacian:
$$ \frac{1}{2} \Delta \Vert \nabla f \Vert^2 - \langle
\nabla f, \nabla \Delta f \rangle \ge \frac{1}{n} \left( \Delta f
\right)^2 + {\rm{Ric}}(\nabla f). $$
Assuming ${\rm{Ric}} \ge K$, this inequality can be transformed with
the Bakry-\'Emery $\Gamma$-calculus, defined by
\begin{equation} \label{eq:Gamma1}
2\Gamma(f,g) = \Delta (fg) - f (\Delta g) - (\Delta f) g = 2 \langle
\nabla f, \nabla g \rangle
\end{equation}
and
\begin{equation} \label{eq:Gamma2}
2\Gamma_2(f,g) = \Delta \Gamma(f,g) - \Gamma(f, \Delta g) - \Gamma(\Delta f, g)
\end{equation}
into
\begin{equation} \label{eq:curvature-dimension-inequality}
\Gamma_2(f) \ge \frac{1}{n} \left( \Delta f \right)^2 + K \Gamma(f),
\end{equation}
where $\Gamma(f)= \Gamma(f,f)$ and $\Gamma_2(f) = \Gamma_2(f,f)$. Note
that \eqref{eq:curvature-dimension-inequality} involves a curvature
parameter $K$ and a dimension parameter $n$. This inequality makes
also sense in the graph theoretical setting and if it is satisfied for
all functions $f$, we say that the graph satisfies the
\emph{curvature-dimension inequality $CD(K,n)$}. In this paper, we are
particular concerned with graphs satisfying the condition
$CD(0,\infty)$. This condition holds for all abelian Cayley graphs (see \cite[Proposition 1.6 (1)]{LinYau}, \cite[Theorem 2.3]{KlarKozma}) but
not for trees of degree $\ge 3$ (see, e.g., \cite[Remark 16]{JostLiu14}). As a general guideline,
$CD(0,\infty)$ requires that every vertex is contained in sufficiently
many short cycles, which can be understood as a kind of local
connectivity.

\subsection{General setting}

Our results are given in the more general setting of weighted graphs
$(G,\mu)$, where $G=(V, E, w)$ is an undirected weighted finite
connected graph and $V$ and $E$ are the sets of vertices and edges,
respectively. Edge weights on $G$ are assigned via the symmetric
function $w: V \times V \to \mathbb{R}_{\ge 0}$ with
$w_{xy} = w_{yx} > 0$ iff $x \sim y$. We say the graph $G$ is
\emph{unweighted}, if $w_{xy}=1$ for any $x \sim y$, in short,
$w = \mathbf{1}_{E}$. Moreover, we assign a positive measure on the
vertex set $V$ via the function $\mu: V \to \mathbb{R}_{>0}$. Let
$d_x:=\sum_{y,y\sim x}w_{xy}$ be the degree of a vertex $x$ and
$d_G:=\max_{x\in V}d_x$ be the maximal degree of the graph $G$. For
any function $f:V\rightarrow \mathbb{R}$ and any vertex $x\in V$, the
associated Laplacian $\Delta$ is defined as
\begin{equation*}
\Delta f(x):=\frac{1}{\mu(x)}\sum_{y,y\sim x}w_{xy}(f(y)-f(x)).
\end{equation*}
This operator is called $\mu$-Laplacian in \cite{BHLLMY13}.

The normalized and the non-normalized Laplacian are contained in this
general setting as the following special cases:
\begin{itemize}
\item non-normalized Laplacian: if $\mu(x)=1$ $\forall\,\,x\in V$
  ($\mu=\mathbf{1}_V$ for short);
\item normalized Laplacian: if $\mu(x)=d_x$ $\forall\,\,x\in V$
  ($\mu=\mathbf{d}_V$ for short).
\end{itemize}
Note that the curvature condition $CD(0,\infty)$ of a graph $(G,w,\mu)$
depends on the choice of Laplacian via the formulas \eqref{eq:Gamma1}, \eqref{eq:Gamma2}, and \eqref{eq:curvature-dimension-inequality}.

The following two quantities $D_G^{non}$ and $D_G^{nor}$ appear
naturally in our arguments.
\begin{equation*}
  D_G^{non}:=\max_{x\in V}\frac{\sum_{y,y\sim x}w_{xy}}{\mu(x)}, \,\,\text{ and }\,\, D_G^{nor}:=\max_{x\in V}\max_{y,y\sim x}\frac{\mu(x)}{w_{xy}}.
\end{equation*}
Observe that on an unweighted graph, in either of the cases
$\mu=\mathbf{1}_V$ or $\mu=\mathbf{d}_V$ we always have
$D_G^{non}D_G^{nor}=d_G$.

We order the eigenvalues of $\Delta$ with multiplicities by
$$ 0 =\lambda_1(G,\mu) < \lambda_2(G,\mu) \le \cdots \le \lambda_{|V|}(G,\mu)
\le 2 D_G^{non},$$
where $\lambda \ge 0$ is an eigenvalue if there exists a non-zero
solution of $\Delta f + \lambda f = 0$.

\subsection{Results}

Combining the improved Cheeger inequality and Buser's inequality leads
to the following eigenvalue ratio:

\begin{theoremlet}\label{thm:eigenvalueratioIntroduction}
 For any finite graph $(G,\mu)$ satisfying
  $CD(0,\infty)$ and any natural number $k \ge 2$, we have
  \begin{equation}\label{eq:eigratio}
  \lambda_k(G,\mu) \le \left(\frac{20\sqrt{2}e}{e-1}\right)^2
    D_G^{non}D_G^{nor}k^2\lambda_2(G,\mu).
  \end{equation}
\end{theoremlet}

It is natural to ask about the optimality of this result: \emph{are the
curvature condition and the dependence on the $D_G^{non}D_G^{nor}$
necessary and can the quadratic term $k^2$ in \eqref{eq:eigratio} be
improved?}

\begin{itemize}
\item The unweighted dumbbell graph in Example \ref{example:dumbbell} provides a
  counterexample to \eqref{eq:eigratio} if we drop the curvature
  condition $CD(0,\infty)$.
\item Weighted triangles and tetrahedra in Examples \ref{example:triangle2} and \ref{example:tetrahedron2}
  show that the factor $D_G^{non}D_G^{nor}$ cannot be dropped.
\item Unweighted cycles in Example \ref{example:Cycle} show that the quadratic
    exponent in \eqref{eq:eigratio} is optimal.
\end{itemize}

Another natural question is: \emph{How restrictive is the $CD(0,\infty)$
condition?} It is possible to produce many new examples from given
graphs satisfying $CD(0,\infty)$ by taking Cartesian products due to
the following fundamental result.

\begin{theoremlet}\label{thm:CartesianProductIntroduction}
  If $(G_1,\mathbf{1}_{V_1})$ and
  $(G_2,\mathbf{1}_{V_2})$ satisfy $CD(K_1, n_1)$ and
  $CD(K_2, n_2)$ respectively, then
  $(G_1\times G_2, \mathbf{1}_{V_1\times V_2})$ satisfies
  $CD(K_1\wedge K_2, n_1+n_2)$.
\end{theoremlet}

Here we used the notion $K_1\wedge K_2:=\min\{K_1,K_2\}$. The above
estimate is optimal at least for the Cartesian product of a graph $G$
with itself (Remark \ref{rmk:optimalCartesian}). Theorem
\ref{thm:CartesianProductIntroduction} can be extended to include the
case of regular graphs with normalized Laplacian operators (Remark
\ref{rmk:extendCartesian}). In particular, the property of satisfying
$CD(0,\infty)$ is preserved when taking Cartesian product in many
cases.

\smallskip

Theorem \ref{thm:eigenvalueratioIntroduction} can be used as a general
source to derive various interesting higher order estimates between
geometric invariants and spectra. Of particular interest are the
higher order Cheeger constants $h_k(G,\mu)$ defined as follows: For a
given $(G, \mu)$, the expansion $\phi_{w,\mu}(S)$ of a nonempty subset
$S$ of $V$ is given by
\begin{equation*}
  \phi_{w,\mu}(S):=\frac{|E(S, V\setminus S)|_w}{\mu(S)},
\end{equation*}
where $|E(S, V\setminus S)|_w:=\sum_{x\sim y, x\in S, y\not\in
  S}w_{xy}$ and $\mu(S):=\sum_{x\in S}\mu(x)$.

\begin{definition}[Higher order Cheeger constants \cite{Miclo2008,LGT2013}]\label{defn:multi-wayIso}
  For a natural number $k$, the $k$-th Cheeger constant of
  $(G,\mu)$ is defined as
  \begin{equation*}
    h_k(G,\mu):=\min_{S_1,\ldots,S_k}\max_{1\leq i\leq k}\phi_{w,\mu}(S_i),
  \end{equation*}
  where the minimum is taken over all collections of $k$ non-empty,
  mutually disjoint subsets $S_1,\ldots, S_k$, i.e., all
  $k$-subpartitions of $V$.
\end{definition}

Note that $h_2(G,\mu)$ coincides with the classical Cheeger constant
and that $h_{k}(G,\mu)\leq h_{k+1}(G,\mu)$.

\smallskip

We use Theorem \ref{thm:eigenvalueratioIntroduction} to derive the
following higher order Buser inequality:

\begin{corolet}\label{cor:higherBuser}
  For any graph $(G, \mu)$ satisfying $CD(0,\infty)$ and any
  natural number $k$, we have
  $$
    h_k(G,\mu)\geq h_2(G,\mu)\geq \frac{(e-1)^2}{40\sqrt{2}e^2}\frac{1}{D_G^{nor}\sqrt{D_G^{non}}}\frac{\sqrt{\lambda_k(G,\mu)}}{k}.
  $$
\end{corolet}

Combining the inequalities of Alon and Milman \cite{AM1985} and Theorem
\ref{thm:eigenvalueratioIntroduction} leads to the following
higher order eigenvalue-diameter estimate:

\begin{corolet} Let $(G,\mathbf{1}_V)$ be an unweighted finite graph
  satisfying $CD(0,\infty)$. Then we have for any $k \ge 2$:
  $$
    {\rm diam}(G)\leq \frac{80 e}{e-1} d_G \log_2|V| \frac{k}{\sqrt{\lambda_k(G,\mu)}}.
  $$
\end{corolet}

This result compares nicely with the celebrated
Cheng estimate (\cite[Corollary 2.2]{Cheng75})
\begin{equation} \label{eq:cheng}
{\rm diam}(M) \leq \sqrt{2n(n+4)} \frac{k}{\sqrt{\lambda_k(M,g)}}
\end{equation}
for compact Riemannian manifolds $(M,g)$ with nonnegative Ricci
curvature.

In combination with the higher order Cheeger inequalities in
\cite{LGT2013}, Theorem \ref{thm:eigenvalueratioIntroduction} implies
the following higher order Cheeger constant-ratio estimate.

\begin{corolet}\label{cor:isoperimetricratioIntroduction}
  There exists a universal constant $C$ such that for any graph
  $(G,\mu)$ satisfying $CD(0,\infty)$ and any natural number $k\geq 2$
  we have
  \begin{equation}\label{eq:isoperimetricratioIntroduction}
   h_k(G,\mu)\leq C D_G^{non}D_G^{nor}k\sqrt{\log k}\,h_2(G,\mu).
 \end{equation}
\end{corolet}

Higher order Cheeger constants lead naturally to the notion of
\emph{$k$-way expanders} introduced by Tanaka \cite{Tanaka13} and
Mimura \cite{Mimura2014} (where $2$-way expander families coincide
with the classical families of expanders). The condition of being a
$k$-way expander family is strictly weaker than the property of being
a classical expander family (see \cite[p. 2525]{Mimura2014}). A
consequence of Corollary \ref{cor:isoperimetricratioIntroduction} is
the fact that the concepts of $k$-way expanders for all $k \ge 2$ are
equivalent in the class of all graphs satisfying $CD(0,\infty)$. This
can be viewed as an analogue (for the $CD(0,\infty)$-class) to
Mimura's result \cite[Corollary 1.5]{Mimura2014} for the class of all
vertex-transitive graphs.

\subsection{Organisation of the paper}

In Section \ref{section:curvinfo}, we discuss in detail the
curvature-dimension inequality in the graph setting, introduce two
interesting examples for later use concerning the optimality of
Theorem \ref{thm:eigenvalueratioIntroduction}, and provide a proof of
Theorem \ref{thm:CartesianProductIntroduction}. In Section
\ref{section:eigenratio}, we derive the eigenvalue ratio estimate,
discuss its optimality with the help of examples, and present
applications.  In Section \ref{section:isoratio}, we discuss a higher
order Cheeger constant ratio estimate and related topics about
multi-way expanders. Finally in the Appendix, we give more details
about the curvature dimension inequality calculations in some examples
and also a self-contained proof of Buser's inequality for graphs
satisfying $CD(0,\infty)$.

\section{Information for a better understanding of curvature}
\label{section:curvinfo}

The curvature-dimension inequality (CD-inequality for short) was
introduced by Bakry and \'{E}mery \cite{BaEm} as a substitute of the lower
Ricci curvature bound of the underlying space. It was studied on
graphs by Schmuckenschl\"ager \cite{Schmuckenschlaeger} and Lin and Yau
\cite{LinYau}, see also \cite{JostLiu14}, \cite{ChungLinYau14}. The
operators $\Gamma$ and $\Gamma_2$ are defined iteratively as follows.

\begin{definition}
  For any two functions $f,g: V\rightarrow \mathbb{R}$, we define
  \begin{equation}\label{eq:Gamma}
    \Gamma(f,g):=\frac{1}{2}\{\Delta(fg)-f\Delta g-g\Delta f\},
  \end{equation}
  and
  \begin{equation}\label{eq:Gammatwo}
    \Gamma_2(f,g):=\frac{1}{2}\{\Delta\Gamma(f,g)-\Gamma(f, \Delta g)-
    \Gamma(g, \Delta f)\}.
  \end{equation}
\end{definition}

We also write $\Gamma(f):=\Gamma(f,f)$ and
$\Gamma_2(f):=\Gamma_2(f,f)$ for short. In particular, by the
definition above we have for any $x\in V$ and any $f,g$
\begin{equation}\label{eq:gradient}
  \Gamma(f,g)(x)=\frac{1}{2\mu(x)}\sum_{y,y\sim x}w_{xy}(f(y)-f(x))(g(y)-g(x)).
\end{equation}
A useful fact is the following summation by part formula,
\begin{equation}\label{eq:summaBypart}
\sum_{x\in V}\mu(x)\Gamma(f,g)(x)=-\sum_{x\in V}\mu(x)f(x)\Delta g(x),
\end{equation}
and also
\begin{equation}\label{eq:gammaHolder}
\Gamma(f,g)\leq\sqrt{\Gamma(f)}\sqrt{\Gamma(g)}.
\end{equation}
Rewriting (\ref{eq:Gamma}) provides the following chain rule,
\begin{equation}
  \Delta(f^2)=2\Gamma(f)+2f\Delta(f).
\end{equation}

\begin{definition}\label{defn:CDinequality}
  Let $K \in \mathbb {R}$ and $n \in {\mathbb R}_+$. We say
  that $(G,\mu)$ satisfies the CD-inequality $CD(K,n)$ if for any
  functions $f$ and any vertex $x$, the following inequality holds,
  \begin{equation}
    \Gamma_2(f)(x)\geq \frac{1}{n}(\Delta f(x))^2+K\Gamma(f)(x).
  \end{equation}
\end{definition}

In particular, we say that $(G,\mu)$ satisfies $CD(0,\infty)$ if for
any functions $f$ we have $\Gamma_2(f)\geq 0$.

In the following subsection we present some illustrative examples of
weighted graphs and their curvature. Subsection 2.2 describes a method
to construct many more examples satisfying $CD(0,\infty)$ from given
ones via Cartesian products and provides a proof of Theorem
\ref{thm:CartesianProductIntroduction}.

\subsection{Examples of graphs satisfying $CD(0,\infty)$}

We will be mainly concerned with the class of graphs satisfying
$CD(0,\infty)$. For purpose of illustration and for later use
concerning the optimality of Theorem
\ref{thm:eigenvalueratioIntroduction}, we present some simple
examples. While explicit curvature calculations of these examples are
given in Appendix \ref{section:appendixTT}, we first mention some
basic principles used in our curvature calculations.

From (\ref{eq:Gammatwo}), we see that $\Gamma_2$ is a symmetric
bilinear form. At every vertex $x\in V$, we can write
$\Gamma_2(f,g)(x) = f^\top \Gamma_2(x) g$, where
$f, g \in \mathbb{R}^{V}$ on the right hand side denote the (column)
vector representation of the functions $f$ and $g$.  Let
$B_2(x):=\{y\in V: \text{dist}(y,x)\leq 2\}$, where $\text{dist}$
stands for the usual shortest-path metric on $V$. Then $\Gamma_2(x)$
is a symmetric matrix, which is non-trivial only on a submatrix of
size $|B_2(x)|\times |B_2(x)|$ which we, denote, again, by
$\Gamma_2(x)$, for simplicity. A graph $(G,\mu)$ satisfies
$CD(0,\infty)$ if and only if $\Gamma_2(x)$ is positive-semidefinite
at every vertex $x\in V$. Observe that the entries of each row of $\Gamma_2(x)$ sum up to zero since $\Gamma_2(x)\mathbf{c}=0$ for any constant
vector $\mathbf{c}$. In particular, if all the diagonal entries are
nonnegative and all the off-diagonal entries are nonpositive, then the
matrix $\Gamma_2(x)$ is diagonally dominant and hence
positive-semidefinite.

\begin{example}\label{example:triangle}
  Consider the triangle graph $\triangle_{xyz}$ with positive edge
  weights $a,b,c$, as shown in Figure \ref{F1}. Assign a measure $\mu$
  to the vertices such that $\mu(x):=C, \mu(y):=B, \mu(z):=A$.
  \begin{itemize}
  \item \emph{Normalized case}, that is, $A=b+c, B=a+c, C=a+b$. Then
    $(\triangle_{xyz},\mu)$ satisfies $CD(0,\infty)$.
  \item \emph{Non-normalized case}, that is, $A=B=C=1$. Then
    $(\triangle_{xyz},\mu)$ does not always satisfy $CD(0,\infty)$. If
    in particular $a=c$, it satisfies $CD(0,\infty)$. But when $a=1,
    c=1/b$, it does not satisfy $CD(0,\infty)$ if $b$ is large/small
    enough. In fact, when $b\geq 5.01$ or $b\leq 0.12$, the symmetric
    curvature matrix $\Gamma_2(x)$ has a negative eigenvalue.
\end{itemize}

This example illustrates the general observation that positivity
of the non-normalized Bakry-\'Emery curvature at a vertex is more
sensitive to large differences in the weights of the adjacent edges than
normalized Bakry-\'Emery curvature.
\end{example}

\begin{figure}[h]
\begin{minipage}[t]{0.45\linewidth}
\centering
\includegraphics[width=0.65\textwidth]{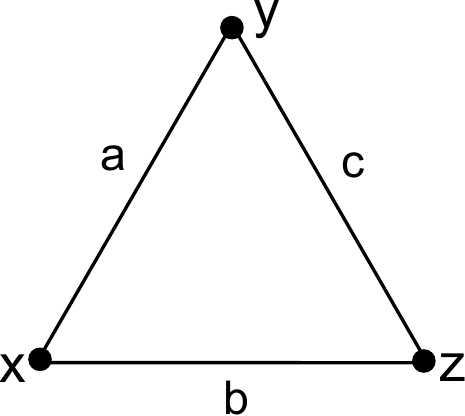}
\caption{Triangle\label{F1}}
\end{minipage}
\hfill
\begin{minipage}[t]{0.45\linewidth}
\centering
\includegraphics[width=0.7\textwidth]{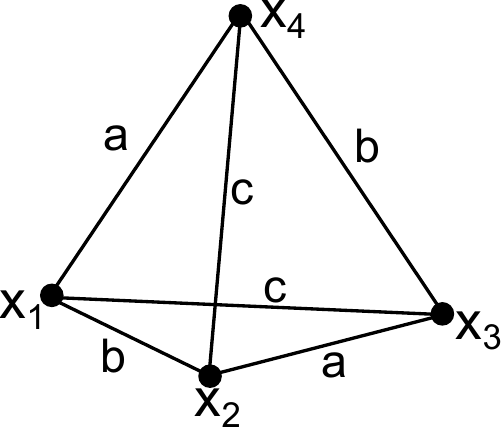}
\caption{Tetrahedron\label{F2}}
\end{minipage}
\end{figure}

\begin{example}\label{example:tetrahedron}
  Consider the tetrahedron graph $T_4$ with positive edge weights
  $a,b,c$ as shown in Figure \ref{F2}. Observe that this graph is
  regular, i.e., $d_{x_i}=a+b+c$ is a constant for every $i$. Assign a
  measure $\mu$ on the vertices such that $\mu(x_i)=A$ for all $i$,
  where $A$ is a positive constant. (Note that this includes both the
  cases of normalized and non-normalized Laplacians.) Then $(T_4,\mu)$
  always satisfies $CD(0,\infty)$.
\end{example}

The details about the curvature matrix $\Gamma_2$ of the triangle
graph and the tetrahedron graph are given in Appendix
\ref{section:appendixTT}. For the normalized case, the curvature of
unweighted triangle graphs was calculated in
\cite[Proposition 1.6]{LinYau}, and the curvature of general unweighted complete graphs
was calculated in \cite[Proposition 3]{JostLiu14}.

In fact, the tetrahedron graph in Figure \ref{F2} belongs to a large
class of graphs called Ricci flat graphs with consistent edge
weights. The concept of a Ricci flat graph was introduced by Chung and
Yau \cite{ChungYau96} and that of consistent edge weights was further
introduced in Bauer, Horn, Lin, Lippner, Mangoubi and Yau
\cite{BHLLMY13}. We refer the reader to \cite{ChungYau96,BHLLMY13} for
the precise definitions. Every graph in this class is a regular graph
(in fact both its unweighted and weighted degree are constant) and
satisfies $CD(0,\infty)$ if we assign a measure $\mu$ such that
$\mu(x)=A$ for all vertices $x$ (see \cite{LinYau, ChungYau96} for the
unweighted case, the weighted case follows from the same
calculations). In particular, every abelian Cayley graph is Ricci flat
and hence satisfies $CD(0,\infty)$.




\subsection{CD-inequalities of Cartesian product graphs}

In this subsection we discuss a method for constructing new graphs
satisfying certain CD-inequalities from known examples, that is,
taking the Cartesian product.

Given two (possibly infinite) graphs $G_1=(V_1, E_1, w)$ and
$G_2=(V_2, E_2, \overline{w})$, their Cartesian product $G_1\times
G_2=(V_1\times V_2, E_{12}, w^{12})$ is a weighted graph with vertex
set $V_1\times V_2$ and edge set $E_{12}$ given by the following
rule. Two vertices $(x_1,y_1), (x_2, y_2)\in V_1\times V_2$ are
connected by an edge in $E_{12}$ if $$x_1=x_2, y_1\sim y_2\text{ in
}E_2\,\,\,\text{ or }\,\,\,x_1\sim x_2 \text{ in }E_1, y_1=y_2.$$
In the first case above we chose the edge weight to be
$\overline{w}_{y_1y_2}$ and in the second case $w_{x_1x_2}$.

Recall the following result from the Introduction which we will prove
in this subsection.

\begin{thmB}
  If $(G_1, \mathbf{1}_{V_1})$ and $(G_2, \mathbf{1}_{V_2})$ satisfy
  $CD(K_1, n_1)$ and $CD(K_2, n_2)$, respectively, then $(G_1\times
  G_2, \mathbf{1}_{V_1\times V_2})$ satisfies $CD(K_1\wedge K_2,
  n_1+n_2)$.
\end{thmB}


Let $f:V_1\times V_2\rightarrow\mathbb{R}$ be a function on the
product graph. For fixed $y\in V_2$, we will write
$f_y(\cdot):=f(\cdot,y)$ as a function on $V_1$. Similarly,
$f^x(\cdot):=f(x,\cdot)$.  The following lemma is crucial for the
proof.

\begin{lemma}\label{lemma:cartesianproduct}
  For any function $f:V_1\times V_2\rightarrow\mathbb{R}$ and any
  $(x,y)\in V_1\times V_2$, we have
  \begin{equation}
    \Gamma_2(f)(x,y)\geq \Gamma_2(f_y)(x)+\Gamma_2(f^x)(y),
  \end{equation}
  where the operators $\Gamma_2$ are understood to be on different
  graphs according to the functions they are acting on.
\end{lemma}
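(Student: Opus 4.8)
The plan is to prove Lemma~\ref{lemma:cartesianproduct} by a direct computation, exploiting the fact that on a Cartesian product the Laplacian decomposes as a sum $\Delta f(x,y) = \Delta^{(1)} f_y(x) + \Delta^{(2)} f^x(y)$, where $\Delta^{(1)}$ and $\Delta^{(2)}$ are the Laplacians on $G_1$ and $G_2$ acting in the respective variables. This follows immediately from the definition of the edge set $E_{12}$ and of the edge weights: a neighbour of $(x,y)$ is either of the form $(x',y)$ with $x'\sim x$ in $E_1$ (weight $w_{xx'}$) or $(x,y')$ with $y'\sim y$ in $E_2$ (weight $\overline{w}_{yy'}$). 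From this and the bilinearity of $\Gamma$ one gets the analogous splitting
\[
  \Gamma(f)(x,y) = \Gamma^{(1)}(f_y)(x) + \Gamma^{(2)}(f^x)(y),
\]
since the two sums over the two types of neighbours do not interact (the cross terms $(f(x',y)-f(x,y))(f(x,y')-f(x,y))$ never occur because there is no single edge changing both coordinates).

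Next I would iterate. Writing $\Gamma_2$ via (\ref{eq:Gammatwo}) and substituting the splittings for $\Delta$ and $\Gamma$, one has to expand $\Delta\Gamma(f)$, $\Gamma(f,\Delta f)$ termwise. The terms in which both ``inner'' and ``outer'' operators act on the same factor $G_1$ (resp.\ $G_2$) reassemble exactly into $\Gamma_2^{(1)}(f_y)(x)$ (resp.\ $\Gamma_2^{(2)}(f^x)(y)$); this is where one uses that $\Delta^{(1)}$ commutes with evaluation in the second variable and vice versa, so that, e.g., $\Delta^{(1)}\bigl(\Gamma^{(1)}(f_y)\bigr)(x)$ is genuinely the $G_1$-quantity with $y$ frozen. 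What remains are the genuinely mixed terms, involving one derivative in each variable. The key point is to show that the sum of all mixed terms is nonnegative. I expect these mixed terms to collect into something of the form
\[
  \sum_{x'\sim x}\sum_{y'\sim y} w_{xx'}\,\overline{w}_{yy'}\,\bigl(\partial_{xx'}\partial_{yy'} f\bigr)^2 \cdot (\text{positive constant}),
\]
i.e.\ a manifestly nonnegative ``mixed Hessian'' term (here $\partial_{xx'}f = f(x',\cdot)-f(x,\cdot)$ and similarly in the other slot, applied in both slots), together possibly with cross terms that cancel by the symmetry of $w$ and $\overline{w}$. In fact one can anticipate the structure from the Riemannian analogue, where the Bochner formula on a product gives $\Gamma_2(f) = \Gamma_2^{(1)}(f) + \Gamma_2^{(2)}(f) + 2|\nabla^{(1)}\nabla^{(2)} f|^2 \ge \Gamma_2^{(1)}(f)+\Gamma_2^{(2)}(f)$; the discrete computation should reproduce exactly this, with the mixed second-difference term playing the role of $|\nabla^{(1)}\nabla^{(2)}f|^2$.

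The main obstacle will be the bookkeeping in the expansion of $\Gamma_2$: there are many terms, each a double sum over neighbours in $G_1$ and neighbours in $G_2$, and one must carefully group them into the three families (pure-$1$, pure-$2$, mixed) and verify that the pure families reconstitute $\Gamma_2^{(1)}(f_y)$ and $\Gamma_2^{(2)}(f^x)$ with no leftover, so that what is left is \emph{exactly} the mixed part. A clean way to organize this is to introduce, for each neighbour $x'$ of $x$ in $G_1$, the difference operator $\nabla_{x'} g := g(x') - g(x)$ acting on functions on $V_1$ (and similarly $\overline{\nabla}_{y'}$ for $G_2$), note that $\nabla_{x'}$ and $\overline{\nabla}_{y'}$ act on different variables and hence commute, and rewrite every term in (\ref{eq:Gamma}) and (\ref{eq:Gammatwo}) using these; then the product structure makes the decomposition essentially automatic. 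Once the identity
\[
  \Gamma_2(f)(x,y) = \Gamma_2(f_y)(x) + \Gamma_2(f^x)(y) + (\text{nonnegative mixed term})
\]
is established, the lemma follows, and Theorem~\ref{thm:CartesianProduct} is then immediate: given $f$ on $V_1\times V_2$, apply $CD(K_1,n_1)$ to $f_y$ on $G_1$ and $CD(K_2,n_2)$ to $f^x$ on $G_2$, add the two inequalities, drop the nonnegative mixed term, and use $\tfrac{1}{n_1}a^2 + \tfrac{1}{n_2}b^2 \ge \tfrac{1}{n_1+n_2}(a+b)^2$ (Cauchy--Schwarz) together with $K_i \ge K_1\wedge K_2$ and $\Gamma(f)(x,y) = \Gamma^{(1)}(f_y)(x) + \Gamma^{(2)}(f^x)(y)$ to conclude $\Gamma_2(f)(x,y) \ge \tfrac{1}{n_1+n_2}(\Delta f(x,y))^2 + (K_1\wedge K_2)\Gamma(f)(x,y)$.
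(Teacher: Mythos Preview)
Your approach is correct and essentially identical to the paper's: both proofs expand $2\Gamma_2(f)(x,y)=\Delta\Gamma(f)(x,y)-2\Gamma(f,\Delta f)(x,y)$ using the splitting of $\Delta$ and $\Gamma$ along the two factors, isolate the pure-$G_1$ and pure-$G_2$ pieces as $\Gamma_2(f_y)(x)$ and $\Gamma_2(f^x)(y)$, and identify the leftover mixed term as the nonnegative sum $\tfrac{1}{2}\sum_{x_i\sim x}\sum_{y_k\sim y}w_{xx_i}\overline{w}_{yy_k}\bigl(f(x_i,y_k)-f(x_i,y)-f(x,y_k)+f(x,y)\bigr)^2$, exactly the discrete mixed Hessian you anticipated. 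The paper organizes the bookkeeping by writing $\Delta\Gamma(f)=L_1+L_2$ and $2\Gamma(f,\Delta f)=G_1+G_2$ and pairing $L_2$ with $G_1$ (and $L_1$ with $G_2$) via the elementary identity $a^2-b^2=(a-b)^2+2(a-b)b$, which is just a concrete realization of your commuting-difference-operator scheme.
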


\begin{proof}
  For simplicity, we will denote by $x_i$ a neighbor of $x\in V_1$,
  and write shortly $w_i:=w_{xx_i}$ in this proof. Similar notions are
  used for $y\in V_2$ and $\overline{w}$.

  Recall $2\Gamma_2(f)(x,y)=\Delta\Gamma(f)(x,y)-2\Gamma(f,\Delta
  f)(x,y)$. By definition, we have
  \begin{align*}
    \Delta\Gamma(f)(x,y)=&\sum_{x_i\sim
      x}w_i\left(\Gamma(f)(x_i,y)-\Gamma(f)(x,y)\right)
    \\&+\sum_{y_k\sim
      y}\overline{w}_k\left(\Gamma(f)(x,y_k)-\Gamma(f)(x,y)\right):=L_1+L_2.
  \end{align*}
  For the first term $L_1$, we calculate
  \begin{align*}
    L_1=&\sum_{x_i\sim x}w_i\left[\Gamma(f_y)(x_i)+\Gamma(f^{x_i})(y)-\Gamma(f_y)(x)-\Gamma(f^x)(y)\right]\\
    =&\Delta\Gamma(f_y)(x)+\frac{1}{2}\sum_{x_i\sim x}\sum_{y_k\sim y}w_i\overline{w}_k\big[(f(x_i,y_k)-f(x_i,y))^2\\
    &\hspace{5cm}-(f(x,y_k)-f(x,y))^2\big].
  \end{align*}
  Similarly, we obtain
  \begin{align*}
    L_2=&\Delta\Gamma(f^x)(y)+\frac{1}{2}\sum_{y_k\sim y}\sum_{x_i\sim
      x}\overline{w}_kw_i\big[(f(x_i,y_k)-f(x,y_k))^2
    \\&\hspace{5cm}-(f(x_i,y)-f(x,y))^2\big].
  \end{align*}
  Furthermore, we have
  \begin{align*}
    2\Gamma(f,\Delta f)(x,y)=&\sum_{x_i\sim x}w_i(f(x_i, y)-f(x,y))\left(\Delta f(x_i,y)-\Delta f(x,y)\right)+\\
    &\sum_{y_k\sim y}\overline{w}_k(f(x, y_k)-f(x,y))\left(\Delta f(x,y_k)-\Delta f(x,y)\right)\\
    :=&T_1+T_2.
  \end{align*}
  Then for the term $T_1$, we have
  \begin{align*}
    T_1=&\sum_{x_i\sim x}w_i(f(x_i, y)-f(x,y))\left(\Delta f_y(x_i)+\Delta f^{x_i}(y)-\Delta f_y(x)-\Delta f^x(y)\right)\\
    =&2\Gamma(f_y,\Delta f_y)(x)+\sum_{x_i\sim x}\sum_{y_k\sim
      y}w_i\overline{w}_k(f(x_i,y)-f(x,y))\times
    \\&\hspace{4cm}(f(x_i, y_k)-f(x_i,y)-f(x,y_k)+f(x,y)).
  \end{align*}
  Similarly, we also have
  \begin{align*}
    T_2=&2\Gamma(f^x,\Delta f^x)(y)+\sum_{y_k\sim y}\sum_{x_i\sim
      x}\overline{w}_kw_i(f(x,y_k)-f(x,y))\times
    \\&\hspace{4cm}(f(x_i, y_k)-f(x,y_k)-f(x_i,y)+f(x,y)).
  \end{align*}
  Observing the fact that
  \begin{align*}
    &(f(x_i,y_k)-f(x,y_k))^2-(f(x_i,y)-f(x,y))^2\\
    =&(f(x_i,y_k)-f(x,y_k)-f(x_i,y)+f(x,y))^2+\\
    &2(f(x_i,y_k)-f(x,y_k)-f(x_i,y)+f(x,y))(f(x_i,y)-f(x,y)),
  \end{align*}
  We arrive at
  \begin{align}\label{eq:halflemm1}
    L_2-\Delta\Gamma(f^x)(y)-(T_1-2\Gamma(f_y,\Delta f_y)(x))\geq 0,
  \end{align}
  and
  \begin{align}\label{eq:halflemm2}
    L_1-\Delta\Gamma(f_y)(x)-(T_2-2\Gamma(f^x,\Delta f^x)(y))\geq 0.
  \end{align}
  This completes the proof.
\end{proof}

\begin{remark}
  The intuition of the above calculation is that the mixed terms are
  "flat". In fact, Lemma \ref{lemma:cartesianproduct} still holds if
  we replace $\Gamma_2(f)$ by $\widetilde{\Gamma}_2(f):=
  \frac{1}{2}\Delta\Gamma(f)-\Gamma\left(f,\frac{\Delta(f^2)}{2f}\right)$.
  Explicitly, for any positive function $f:V_1\times V_2\rightarrow
  \mathbb{R}$ and any $(x,y)\in V_1\times V_2$, we have
  \begin{equation}
    \widetilde{\Gamma}_2(f)(x,y)\geq \widetilde{\Gamma}_2(f_y)(x)+\widetilde{\Gamma}_2(f^x)(y).
  \end{equation}
  The proof is done in a similar way. The operator
  $\widetilde{\Gamma}_2$ was introduced in \cite{BHLLMY13} to define a
  modification of the CD-inequality, called exponential
  curvature-dimension inequality $CDE(K, n)$ (see Definition 3.9 in
  \cite{BHLLMY13}). Under the assumption of their new notion of
  curvature lower bound, they prove Li-Yau type gradient estimates
  (dimension-dependent) for the heat kernels on graphs.
\end{remark}

\begin{proof}[Proof of Theorem \ref{thm:CartesianProductIntroduction}]
  By Lemma \ref{lemma:cartesianproduct}, we have for any function $f:
  V_1\times V_2\rightarrow \mathbb{R}$ and any $(x,y)\in V_1\times
  V_2$,
  \begin{align}
    &\Gamma_2(f)(x,y)\geq \Gamma_2(f_y)(x)+\Gamma_2(f^x)(y)\notag\\
    \geq &\frac{1}{n_1}(\Delta f_y(x))^2+\frac{1}{n_2}(\Delta f^x(y))^2+K_1\Gamma(f_y)(x)+K_2\Gamma(f^x)(y)\notag\\
    \geq &\frac{1}{n_1+n_2}(\Delta f_y(x)+\Delta f^x(y))^2+K_1\wedge
    K_2(\Gamma(f_y)(x)+\Gamma(f^x)(y)).\label{eq:dimensionestiamte}
  \end{align}
  In the last inequality above we used Young's inequality. Recalling
  the facts $\Delta f_y(x)+\Delta f^x(y)=\Delta f(x,y)$ and
  $\Gamma(f_y)(x)+\Gamma(f^x)(y)=\Gamma (f)(x,y)$, we complete the
  proof.
\end{proof}

\begin{remark}\label{rmk:extendCartesian}
  We can have more flexibility concerning the measures assigned to
  vertices. Suppose the vertex measures assigned to $G_1, G_2$ and
  $G_1\times G_2$ take the constant values $\mu_1, \mu_2$ and
  $\mu_{12}$ on each vertex, respectively, then the modified
  conclusion of Theorem \ref{thm:CartesianProductIntroduction} is that
  $(G_1\times G_2, \mu_{12})$ satisfies
  \begin{equation}\label{eq:CartesianGeneral} CD\left(\frac{1}{\mu_{12}}
      (\mu_1 K_1\wedge\mu_2 K_2),n_1+n_2\right).
  \end{equation}
  This modification covers the case of normalized Laplacians on
  regular graphs. In particular, if both $(G_1, \mu_1)$ and $(G_2,
  \mu_2)$ satisfy $CD(0, \infty)$, then $(G_1\times G_2, \mu_{12})$
  also satisfies $CD(0,\infty)$.
\end{remark}

\begin{remark}\label{rmk:optimalCartesian}
  The estimates of the CD-inequality in Theorem
  \ref{thm:CartesianProductIntroduction} (in fact also
  (\ref{eq:CartesianGeneral})) are tight at least for the Cartesian
  product of a graph $G$ with itself. That is, if $G$ satisfies
  $CD(K, n)$ precisely (i.e., for given dimension $n$, $K$ is chosen
  largest possible), then the CD-inequality in Theorem
  \ref{thm:CartesianProductIntroduction} (or in
  (\ref{eq:CartesianGeneral})) is optimal for $G\times G$. This can be
  seen as follows. First note that this tightness depends on that of
  (\ref{eq:halflemm1}), (\ref{eq:halflemm2}) and
  (\ref{eq:dimensionestiamte}). By assumption, there exists a function
  $f$ on the graph $G$ and a vertex $x$ of the graph such that
  \begin{equation*}
    \Gamma_2(f)(x)=\frac{1}{n}(\Delta f(x))^2+K\Gamma(f)(x),
  \end{equation*}
  with $\Gamma(f)(x)\neq 0$. We can then choose a particular function
  $F$ on $G\times G$ such that (\textrm{i}) $F(x,x)=f(x)$;
  (\textrm{ii})$F(x_i,x)=f(x_i)$ for all neighbors $x_i$ of $x$ in
  $G$; (\textrm{iii}) $F(x,x_k)=f(x_k)$ for all neighbors $x_k$ of $x$
  in $G$; (\textrm{iv}) $F(x_i,x_k)=F(x,x_k)+F(x_i,x)-F(x,x)$. For
  such a $F$ the equalities in (\ref{eq:halflemm1}) and
  (\ref{eq:halflemm2}) are attained at $(x,x)$ and $\Delta
  F_x(x)=\Delta F^x(x)$ and (by consequence) $\Gamma(F_x)(x)=\Gamma(F^x)(x)$, hence the equality in
  (\ref{eq:dimensionestiamte}) is also attained. Therefore we obtain
  \begin{equation*}
    \Gamma_2(F)(x,x)=\frac{1}{2n}(\Delta F(x,x))^2+K\Gamma(F)(x,x),
  \end{equation*}
  which confirms the postulated tightness.

  In the specific example $(G,\mathbf{1}_V)$, where $G$ is the
  unweighted graph consisting of just one edge with end-points $x,y$,
  and writing $\Gamma(f,g)(x) = f^\top \Gamma(x) g$ and $\Delta f(x) =
  \Delta(x) f$, an easy calculation leads to
  $$ \Gamma_2(x) = 2 \Gamma(x) = \Delta(x)^\top \Delta(x) = \begin{pmatrix}
    1 & -1 \\ -1 & 1 \end{pmatrix}, $$
  and the curvature-dimension condition $CD(K,n)$ translates into
  $K \le 2 - 2/n$. This means that $G$ satisfies $CD(2-2/n,n)$ precisely
  and $G \times G$ satisfies $CD(2-2/n,2n)$ precisely, as well.
\end{remark}

\section{Eigenvalue ratios and higher order spectral
  bounds}\label{section:eigenratio}

The first subsection is concerned with the eigenvalue ratio estimate
under the $CD(0,\infty)$ condition and its optimality
properties. Subsections 3.2 and 3.3 discuss applications: lower
estimates for higher order Cheeger constants and upper diameter
estimates in term of eigenvalues.

\subsection{Eigenvalue ratio}

As in \cite{Liu14} for the Riemannian manifold case, we need to
combine the improved Cheeger inequality with the following Buser type
inequality:

\begin{thm}\label{thm:Buser}
  Let $(G,\mu)$ satisfy $CD(0,\infty)$. Then we
  have
  \begin{equation}\label{eq:BuserCD0}
    h_2(G,\mu)\geq \frac{e-1}{2e}\frac{1}{\sqrt{D_G^{nor}}}
    \sqrt{\lambda_2(G,\mu)}.
  \end{equation}
\end{thm}

This is an adaption of the Buser inequality in \cite{KlarKozma} to our
setting of weighted graphs with a slightly better constant in
\eqref{eq:BuserCD0}. For the reader's convenience, we present a proof
for (\ref{eq:BuserCD0}) in Appendix \ref{section:appendixBuser}. The
dependence on $D_G^{nor}$ comes from Lemma \ref{lemma:boundaryMeasure}
there, see also \cite{BHLLMY13}.

We also need the following improved Cheeger inequality in
\cite{KLLGT2013} to obtain the eigenvalue ratio estimate. Their
context was the weighted normalized setting.

\begin{thm}[Kwok, Lau, Lee, Oveis Gharan and Trevisan]
  On $(G,\mu)$ we have for any natural number $k \ge 2$,
  \begin{equation}\label{eq:improveCheeger}
    h_2(G,\mu)\leq 10\sqrt{2D_G^{non}}k\frac{\lambda_2(G,\mu)}{\sqrt{\lambda_k(G,\mu)}}.
  \end{equation}
\end{thm}

Here the setting is slightly more general than that in
\cite{KLLGT2013}. To obtain (\ref{eq:improveCheeger}), one needs to be
careful about the final calculations in the proof of Proposition 3.2
in \cite{KLLGT2013} (pp.16 in the full version of \cite{KLLGT2013})
and the fact that $\lambda_k\leq 2D_G^{non}$.

Combining (\ref{eq:BuserCD0}) and (\ref{eq:improveCheeger}), we get
the following eigenvalue ratio estimate stated in the Introduction:

\begin{thmA}
  For any finite graph $(G,\mu)$ satisfying
  $CD(0,\infty)$ and any natural number $k \ge 2$, we have
  \begin{equation} \label{eq:eigenvalueratio}
  \lambda_k(G,\mu) \le \left(\frac{20\sqrt{2}e}{e-1}\right)^2
    D_G^{non}D_G^{nor}k^2\lambda_2(G,\mu).
  \end{equation}
\end{thmA}

We remark that this estimate does not depend on the size of the graph.
The following examples are concerned with the optimality of this result.

The first example shows that the order of $k$ in the above
estimate is optimal.

\begin{example}\label{example:Cycle}
  Consider an unweighted cycle $\mathcal{C}_N$ with $N\geq 3$
  vertices. Note that $\mathcal{C}_N$ can be considered as an abelian
  Cayley graph and hence satisfies $CD(0,\infty)$. Assign to it a
  measure $\mu$ which takes the constant value $2$ on every
  vertex. Then the eigenvalues of the associated Laplacian are given
  by (see e.g. Example 1.5 in \cite{Chung} or Section 7 in
  \cite{Liu13}),
  \begin{equation*}
    \lambda_k(\mathcal{C}_N)=1-\cos\left(\frac{2\pi}{N}\left\lfloor\frac{k}{2}\right\rfloor\right), \,\,\,k=1,2,\ldots,N.
  \end{equation*}
  Observe that we have
  \begin{equation*}
    \lim_{N\rightarrow \infty}\frac{\lambda_k(\mathcal{C}_N)}{\lambda_2(\mathcal{C}_N)}=\left\lfloor\frac{k}{2}\right\rfloor^2.
  \end{equation*}
\end{example}

The dependence on the term $D_G^{non}D_G^{nor}$ is also necessary in
the estimate (\ref{eq:eigenvalueratio}). This can be concluded from
the following examples.

\begin{example}\label{example:triangle2}
  Let us revisit the triangle graph $(\triangle_{xyz},\mu)$ in Example
  \ref{example:triangle}. Consider the special case that $A=B=C=1$ and
  $a=c$. Suppose $b\geq a$. Then this graph satisfies
  $CD(0,\infty)$. The eigenvalues of the non-normalized Laplacian are
  \begin{equation*}
    \lambda_1 = 0< \lambda_2 = 3a \leq \lambda_3 = a+2b.
  \end{equation*}
  Note further that $D_G^{non}D_G^{nor}=(a+b)/a$. Therefore, we have
  \begin{equation}
    \frac{1}{3}D_G^{non}D_G^{nor}\leq\frac{\lambda_3(\triangle_{xyz})}{\lambda_2(\triangle_{xyz})}\leq\frac{2}{3}D_G^{non}D_G^{nor}.
  \end{equation}
\end{example}

We give another example which works for the eigenvalue ratios of both
non-normalized and normalized Laplacians.

\begin{example}\label{example:tetrahedron2}
  Consider the tetrahedron graph $(T_4,\mu)$ in Example
  \ref{example:tetrahedron} with the assumption that $b\geq
  a=c$. Recall $\mu=A$ is a constant measure. Then the eigenvalues of
  the $\mu$-Laplacian are
  \begin{equation*}
    \lambda_1 = 0 < \lambda_2 = \frac{4a}{A} \leq \lambda_3 = \lambda_4 =
    \frac{2a+2b}{A}.
  \end{equation*}
  Moreover, we have $D_G^{non}D_G^{nor} = (2a+b)/a$. Hence we obtain
  \begin{equation}
    \frac{1}{4}D_G^{non}D_G^{nor}\leq\frac{\lambda_3(T_4)}{\lambda_{2}(T_4)}\leq\frac{1}{2}D_G^{non}D_G^{nor}.
  \end{equation}
\end{example}

The following example shows that we cannot expect that the eigenvalue
ratio estimate (\ref{eq:eigenvalueratio}) remains valid if a graph
$(G,\mu)$ possesses a small portion of vertices not satisfying
$CD(0,\infty)$.

\begin{example}\label{example:dumbbell}
  Consider a sequence of dumbbell graphs
  $\{G_N\}_{N=3}^{\infty}$. Given two copies of complete graphs over
  $N$ vertices, $\mathcal{K}_N$ and $\mathcal{K}_N'$, $G_N$ is the
  graph obtained via connecting them by a new edge $e=(y_0, y'_0)$ as
  shown in Figure \ref{F3}.
  \begin{figure}[h]
    \centering
    \includegraphics[width=0.5\textwidth]{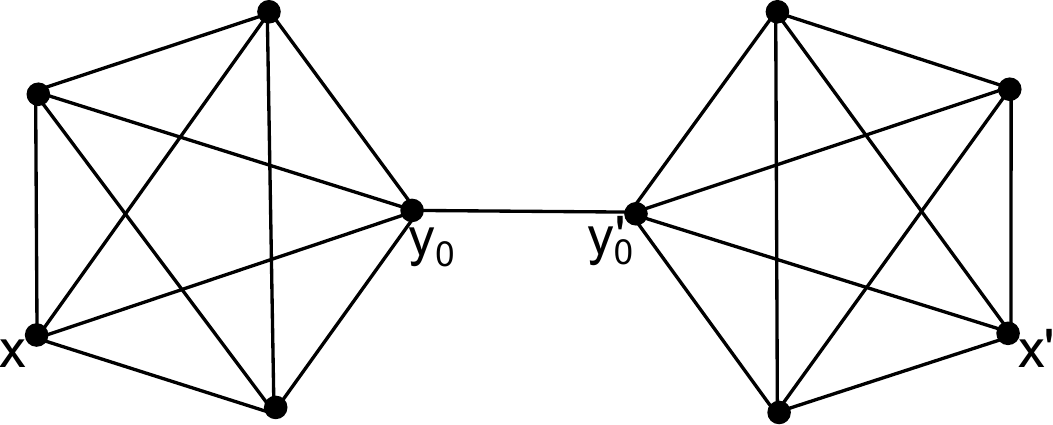}
    \caption{The dumbbell graph $G_5$\label{F3}}
  \end{figure}
  It was shown in \cite{JostLiu14} that the complete graph
  $\mathcal{K}_N$ with normalized Laplacian satisfies
  $CD\left(\frac{N+2}{2(N-1)}, \infty\right)$. Modifying the
  calculation in the proof of this fact in \cite{JostLiu14}, we obtain
  the following results.
  \begin{itemize}
  \item With the normalized Laplacian, $G_N$ satisfies
    $CD\left(\frac{1}{2},\infty\right)$ at every vertex which is not
    $y_0,y_0'$. At $y_0, y_0'$, $CD(0,\infty)$ does not hold when
    $N\geq 3$.
  \item With the non-normalized Laplacian, $G_N$ satisfies
    $CD\left(\frac{N}{2},\infty\right)$ at every vertex which is not
    $y_0,y_0'$. At $y_0, y_0'$, $CD(0,\infty)$ does not hold when
    $N\geq 3$.
  \end{itemize}
  We present the calculations in Appendix
  \ref{section:appendixDumbell}. With only $2$ of $2N$ vertices
  violating the curvature condition, the eigenvalue ratio estimate
  (\ref{eq:eigenvalueratio}) does not hold any more. Indeed, for the
  normalized Laplacian, we observe by Cheeger's inequality that
  $$ \lambda_2(G_N)\leq 2h(G) \le2\frac{|E(S,V\backslash S)|}{\mu(S)} = \frac{2}{N(N-1)+1}, $$
  choosing $S$ equals $\mathcal{K}_N$ to estimate $h(G)$, given in \eqref{eq:classcheeg}.

  Recall that the spectrum of a complete graph $\mathcal{K}_N$ is the
  simple eigenvalue $0$ and the eigenvalue $\frac{N}{N-1}$ with
  multiplicity $N-1$. Deleting the edge $\{y_0,y_0'\}$ from $G_N$, we
  obtain two disjoint copies of $\mathcal{K}_N$ with combined spectrum
  $\lambda_1=\lambda_2=0 < \lambda_3=\lambda_4 = \cdots =
  \frac{N}{N-1}$.  By an interlacing theorem for edge-deleting in
  \cite{Chenetal2004}, we conclude that
  $\lambda_4(G_N)\geq \frac{N}{N-1}$. (Note that the Laplacian
  $\mathcal{L}$ there is slightly different but unitarily equivalent
  to our normalized Laplacian, since
  $\mathcal{L} = D^{1/2} \Delta D^{-1/2}$.)  Therefore we have
  \begin{equation*}
    \frac{\lambda_4(G_N)}{\lambda_2(G_N)}\geq \frac{1}{2}N^2.
  \end{equation*}
  Since in this case $D_G^{non}D_G^{nor}=N$,
  (\ref{eq:eigenvalueratio}) does not hold when $N$ is large. Similar
  arguments show also for the non-normalized Laplacian that
  (\ref{eq:eigenvalueratio}) is no longer true for all $N$. (The
  interlacing theorem for non-normalized Laplacian is well-known, see
  e.g. \cite{Heuvel95}).
\end{example}

\begin{remark}
  Replacing the sequence of complete graphs $\mathcal{K_N}$ above by a
  sequence of expanders, we obtain graphs \emph{of bounded degree}
  violating the curvature condition and for which
  (\ref{eq:eigenvalueratio}) does not hold.
\end{remark}

\subsection{Higher order Buser inequalities}

Higher order Buser inequalities were first established by Funano
\cite{Funano2013} in the Riemannian setting and then improved in
\cite{Liu14}. The following result from the Introduction seems to be
the first higher order Buser type inequality in the graph setting.

\begin{corC}
  For any graph $(G, \mu)$ satisfying $CD(0,\infty)$ and any
  natural number $k$, we have
  \begin{equation} \label{eq:higherorderBuser}
    h_k(G,\mu)\geq h_2(G,\mu)\geq \frac{(e-1)^2}{40\sqrt{2}e^2}\frac{1}{D_G^{nor}\sqrt{D_G^{non}}}\frac{\sqrt{\lambda_k(G,\mu)}}{k}.
  \end{equation}
\end{corC}

\begin{proof}
  The first inequality is given by the monotonicity of the higher
  order Cheeger constants $h_k(G,\mu)$ (as functions in $k$). The
  second inequality follows from Buser's inequality
  (\ref{eq:BuserCD0}) and Theorem
  \ref{thm:eigenvalueratioIntroduction}.
\end{proof}

\begin{remark} Inequalities in the other direction complementing
  \eqref{eq:higherorderBuser} (without any curvature condition) are
  given by the higher order Cheeger inequalities
  \eqref{eq:highordcheeg} by Lee, Oveis Gharan and Trevisan \cite{LGT2013}
  from the Introduction. In our setting of weighted graphs, they read
  as
  \begin{equation}
    h_k(G,\mu)\leq C\sqrt{D_G^{non}}k^2\sqrt{\lambda_k(G,\mu)},
  \end{equation}
  where $C$ is an universal constant. (For the generalization into our
  setting, one needs to slightly modify the calculation for
  $\mathbb{E}\left(\sum_{i=1}^mw(E(\hat{S}_i,\overline{\hat{S}}_i))\right)$
  in Lemma 4.7 of \cite{LGT2013}.)  Hence, for a graph $(G,\mu)$
  satisfying $CD(0,\infty)$ and with bounded degree, $h_k(G,\mu)$ and
  $\sqrt{\lambda_k(G,\mu)}$ are equivalent up to polynomials of $k$ of degree smaller or equal to $2$.
\end{remark}

\begin{remark}
  In \cite{BHLLMY13}, Bauer, Horn, Lin, Lippner, Mangoubi and Yau
  proved for a graph $(G,\mu)$ satisfying another, related curvature
  condition, namely, the \emph{exponential curvature-dimension
    inequality $CDE(0,n)$} (see \cite[Definition 3.9]{BHLLMY13}) and
  for a fixed $0<\alpha<1$ that there exists a constant $C(\alpha)$,
  depending only on $\alpha$, such that
  \begin{equation}
    \lambda_2(G,\mu)\leq C(\alpha)D_G^{nor}n h_2(G,\mu)^2.
  \end{equation}
  That is, they obtain a dimension-dependent Buser inequality. Our
  approach also applies to their setting. In particular, we obtain the
  following eigenvalue ratio estimate and higher order Buser
  inequalities under the condition $CDE(0,n)$,
  \begin{equation}
    \lambda_k(G,\mu)\leq C_1(\alpha)D_G^{nor}D_G^{non}nk^2\lambda_2(G,\mu),
  \end{equation}
  \begin{equation}
    h_k(G,\mu)\geq h_2(G,\mu)\geq C_2(\alpha)\frac{1}{D_G^{nor}\sqrt{D_G^{non}}}\frac{1}{nk}\sqrt{\lambda_k(G,\mu)},
  \end{equation}
  where $C_1(\alpha)$, $C_2(\alpha)$ are constants depending only on
  $\alpha$.
\end{remark}

\subsection{A discrete analogue of Cheng's Theorem}

In the manifold setting, Cheng's Theorem \cite{Cheng75} provides a
relation between the diameter and the $k$-eigenvalue of the Laplacian
under non-negative Ricci curvature assumption, presented in
\eqref{eq:cheng} in the Introduction. In this subsection, we derive a
graph theoretical analogue. To do so, we restrict our considerations
to Alon and Milman's setting \cite{AM1985} of unweighted non-normalized
graphs $(G,\mu)$ with $\mu = \mathbf{1}_V$. We recall the following
eigenvalue-diameter estimate from \cite[Theorem 2.7]{AM1985}.

\begin{thm}[Alon and Milman] \label{thm:alonmilman}
  Let $G=(V,E)$ be a finite connected graph with maximal degree $d_G$
  and $\Delta$ be the non-normalized Laplacian. Then we have
  \begin{equation}\label{eq:AMdiameter}
{\rm diam}(G) \le 2 \sqrt{\frac{2d_G}{\lambda_2(G)}} \log_2 |V|.
\end{equation}
\end{thm}

Combining Theorem \ref{thm:alonmilman} with Theorem
\ref{thm:eigenvalueratioIntroduction}, we obtain the following result
from the Introduction.

\begin{corD} Let $(G,\mathbf{1}_V)$ be an unweighted finite graph
  satisfying $CD(0,\infty)$. Then we have for any $k \ge 2$:
  \begin{equation}\label{eq:AMlambdak}
    {\rm diam}(G)\leq \frac{80 e}{e-1} d_G \log_2|V| \frac{k}{\sqrt{\lambda_k(G,\mu)}}.
  \end{equation}
\end{corD}

\begin{remark}
  Note that there are various further developments in connection with
  Alon and Milman's estimate (\ref{eq:AMdiameter}), see, e.g., the work of
  Chung \cite{Chung89}, Mohar \cite{Mohar91}, Chung, Grigor'yan and Yau
  \cite{CGY} and Houdr\'{e} and Tetali \cite{HT01}. In principle, the
  estimate (\ref{eq:AMlambdak}) can be improved accordingly.
\end{remark}

\section{Ratios of higher order Cheeger constants and multi-way
  expanders}\label{section:isoratio}

In this section we derive the following result from the Introduction and
discuss applications in the topic of multi-way expanders.

\begin{corE}
  There exists a universal constant $C$ such that for any graph
  $(G,\mu)$ satisfying $CD(0,\infty)$ and any natural number $k\geq 2$
  we have
  \begin{equation}\label{eq:isoperimetricratioIntroduction}
   h_k(G,\mu)\leq C D_G^{non}D_G^{nor}k\sqrt{\log k}\,h_2(G,\mu).
 \end{equation}
\end{corE}

First, we recall the following results of Lee, Oveis Gharan and
Trevisan \cite[Theorems 1.2, 3.9, and Corollary 4.2]{LGT2013} in our
general setting:

\begin{thm}[Lee, Oveis Gharan and Trevisan]
  Let $(G,\mu)$ be a weighted graph with vertex measure $\mu$. Then we have
  \begin{equation}\label{eq:LGTshifted}
    h_k(G,\mu)\leq C\sqrt{D_G^{non}\log k\lambda_{2k}},
  \end{equation}
  with a universal constant $C > 0$. Moreover, if the graph $G$ has
  genus as most $g \ge 1$ (i.e. $G$ can be embedded into an orientable
  surface of genus at most $g$ without edge crossings), we have
  \begin{equation}\label{eq:LGTgenus}
  h_k(G,\mu)\leq C' \log(g+1)\sqrt{D_G^{non}\lambda_{2k}},
  \end{equation}
  with another universal constant $C' > 0$.
\end{thm}

\begin{proof}[Proof of Corollary \ref{cor:isoperimetricratioIntroduction}]
  Using \eqref{eq:LGTshifted} and Theorems
  \ref{thm:eigenvalueratioIntroduction} and \ref{thm:Buser}, we obtain
\begin{eqnarray*}
  h_k(G,\mu) &\le& C\sqrt{D_G^{non}\log k\lambda_{2k}} \\
  &\le& C' \sqrt{D_G^{non}\log k}\sqrt{D_G^{non}D_G^{nor}}(2k)
\sqrt{\lambda_2(G,\mu)} \\
  &\le& C'' \sqrt{D_G^{non}\log k}\sqrt{D_G^{non}D_G^{nor}}(2k)
\sqrt{D_G^{nor}} h_2(G,\mu) \\
  &=& 2C'' D_G^{non}D_G^{nor} k\sqrt{\log k}\, h_2(G,\mu),
\end{eqnarray*}
with various universal constants $C,C',C''$.
\end{proof}

Moreover, if we replace \eqref{eq:LGTshifted} by \eqref{eq:LGTgenus}
in the above proof, we obtain the following result.

\begin{coro}\label{coro:genus}
  There exists a universal constant $C$ such that if $(G,\mu)$
  satisfies $CD(0,\infty)$, then for any $k\geq 2$,
  \begin{equation}\label{eq:isoperimetricratioPlanar}
    h_k(G,\mu)\leq C D_G^{non}D_G^{nor}\log(g_G+1)kh_2(G,\mu),
  \end{equation}
  where $g_G\geq 1$ is an upper bound of the genus of $G$.
\end{coro}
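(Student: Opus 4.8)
The plan is to follow exactly the same three-step strategy already used for Theorem~\ref{thm:isoperimetricratio}, with the generic factor $\sqrt{\log k}$ replaced by the genus-dependent quantity $\log(g_G+1)$. Concretely, I would start from the genus-refined higher-order Cheeger inequality of Lee--Oveis Gharan--Trevisan displayed just above, namely
\begin{equation*}
  h_k(G,\mu)\leq C_1'\log(g_G+1)\sqrt{D_G^{non}\lambda_{2k}(G,\mu)},
\end{equation*}
which holds whenever $g_G\geq 1$ is an upper bound for the genus of $G$, after the same routine modification of the relevant estimate in \cite{LGT2013} that is needed to pass to the $\mu$-Laplacian setting (the analogue of the remark accompanying (\ref{eq:LGTshifted})).

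Next I would eliminate $\lambda_{2k}$ in favour of $\lambda_2$. Applying Theorem~\ref{thm:eignevalueratio} with $2k$ in place of $k$ gives
\begin{equation*}
  \lambda_{2k}(G,\mu)\leq \left(\frac{20\sqrt{2}e}{e-1}\right)^2 D_G^{non}D_G^{nor}(2k)^2\lambda_2(G,\mu),
\end{equation*}
and then Buser's inequality (\ref{eq:BuserCD0}), rewritten as $\sqrt{\lambda_2(G,\mu)}\leq \frac{2e}{e-1}\sqrt{D_G^{nor}}\,h_2(G,\mu)$, controls the remaining eigenvalue. Substituting both bounds into the genus-refined Cheeger inequality and collecting constants yields
\begin{equation*}
  h_k(G,\mu)\leq C_1'\cdot\frac{80\sqrt{2}e^2}{(e-1)^2}\,\log(g_G+1)\,D_G^{non}D_G^{nor}\,k\,h_2(G,\mu),
\end{equation*}
so the universal constant $C:=C_1'\cdot\frac{80\sqrt{2}e^2}{(e-1)^2}$ works, giving (\ref{eq:isoperimetricratioPlanar}).

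There is no genuine obstacle here: the whole argument is the same chain (higher-order Cheeger $\to$ eigenvalue ratio $\to$ Buser) as for Theorem~\ref{thm:isoperimetricratio}, and the only point requiring a line of care is that the genus-refined estimate of \cite{LGT2013}, like (\ref{eq:LGTshifted}), must be transported to the weighted $\mu$-Laplacian setting, which is done exactly as indicated there. The hypothesis $g_G\geq 1$ is used only to ensure $\log(g_G+1)\geq\log 2>0$, so that the right-hand side is a genuine positive bound.
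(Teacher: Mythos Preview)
Your proposal is correct and is exactly the argument the paper intends: it states the corollary immediately after the genus-refined Lee--Oveis Gharan--Trevisan inequality with the words ``Therefore we have the following result,'' leaving the reader to repeat the chain from the proof of Theorem~\ref{thm:isoperimetricratio} (higher-order Cheeger $\to$ eigenvalue ratio with $2k$ $\to$ Buser), which is precisely what you wrote out.
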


\begin{remark}
  The order of $k$ in (\ref{eq:isoperimetricratioPlanar}) is
  optimal. This follows from the example of unweighted cycles
  $\mathcal{C}_N$ (which are planar) with the same measure $\mu$ as in
  Example \ref{example:Cycle}, since we have (see e.g. 
 \cite[Proposition 7.3]{Liu13}).
  \begin{equation*}
    h_k(\mathcal{C}_N)=\frac{1}{\left\lfloor\frac{N}{k}\right\rfloor},\,\,\,\text{ for }2\leq k\leq N.
  \end{equation*}
\end{remark}

The dependence on $D_G^{non}D_G^{nor}$ of the ratio estimate is also
necessary. This follows from the following example analyzed in Mimura
\cite{Mimura2014}.

\begin{example}
  Consider the Cartesian product graph $G_{N,2}$ of the unweighted
  complete graphs $\mathcal{K}_N$ and $\mathcal{K}_2$. Assign the
  measure $\mu=\mathbf{1}$ to it. Since complete graphs satisfy
  $CD(0,\infty)$ (in fact the complete graph $\mathcal{K}_N$ is the Cayley graph of ${\mathbb{Z}}/{N}{\mathbb{Z}}$ when all its elements are taken as generators), we know by Theorem
  \ref{thm:CartesianProductIntroduction} that $G_{N,2}$ satisfies
  $CD(0,\infty)$. It is straightforward to see that $h_2(G_{N,2})\leq
  1$. Observe that we can partition $G_{N,2}$ into two induced
  subgraphs $\mathcal{K}_N$ and $\mathcal{K}_N'$. By Lemma 1 of Tanaka
  \cite{Tanaka13} (see also \cite{Mimura2014}), we
  have $$h_3(G_{N,2})\geq h_2(\mathcal{K}_N)=\frac{N}{2}.$$ (Note that
  Tanaka's lemma was stated for the constants $\{\mathfrak{h}_k(G)\}$
  defined below. One can check that it also works for $\{h_k(G)\}$
  here.) Therefore, we obtain
  \begin{equation} \label{eq:exampleMimura}
    \frac{h_3(G_{N,2})}{h_2(G_{N,2})}\geq \frac{N}{2}=\frac{1}{2}d_G.
  \end{equation}
  This shows the necessity of the dependence on the term
  $D_G^{non}D_G^{nor}=d_G$. Note (\ref{eq:exampleMimura}) also holds
  for the normalized measure $\mu$. We comment that one can also
  analyse the eigenvalues of this example to show the necessity of the
  dependence on the degree in (\ref{eq:eigenvalueratio}) (see also
  \cite{Mimura2014}) using an interlacing theorem or Lemma 6 of
  \cite{Tanaka13}.
\end{example}

Now we restrict our considerations to the setting $w=\mathbf{1}_E$ and
$\mu=\mathbf{1}_V$, that is, $G=(V,E)$ is now an unweighted graph with
non-normalized Laplacian. Recently, the concept of multi-way expanders
was defined and studied in Tanaka \cite{Tanaka13} and Mimura
\cite{Mimura2014}.
We denote $\mathfrak{h}_k(G)$ to be the following larger $k$-way
isoperimetric constant (compare with Definition
\ref{defn:multi-wayIso})
\begin{equation}
  \mathfrak{h}_k(G):=\min_{S_1,\ldots, S_k}\max_{1\leq i\leq k}\phi_{1,\mathbf{1}}(S_i),
\end{equation}
where the minimum is taken over all partitions of $V$,
i.e. $V=\bigsqcup_{i=1}^kS_i$, $S_i\neq\emptyset$ for all $i$.

\begin{definition}[Multi-way expanders
  \cite{Tanaka13,Mimura2014}] \label{defn:multi-wayExpanders}
  Let $k \ge 2$ be a natural number. A sequence of finite graphs
  $\{G_m=(V_m, E_m)\}_{m\in \mathbb{N}}$ is called a sequence of
  $k$-way expanders if we have (i) $\sup_md_{G_m}<\infty$; (ii)
  $\lim_{m\rightarrow \infty}|V_m|=\infty$; (iii)
  $\inf_m\mathfrak{h}_k(G_m)>0$.
\end{definition}

Observe that $2$-way expander families coincide with classical
families of expanders. In general, the property of being $(k+1)$-way
expanders is strictly weaker than being $k$-way expanders (see
\cite{Mimura2014}). However,
Mimura \cite{Mimura2014} proved that the concepts of $k$-way expanders
for all $k\geq 2$ are equivalent within the class of finite,
connected, vertex transitive graphs.

As a consequence of Corollary \ref{cor:isoperimetricratioIntroduction}, we have

\begin{coro}\label{thm:multi-way expanders}
  For the class of finite connected graphs satisfying $CD(0,\infty)$,
  the concepts of $k$-way expanders for all $k\geq 2$ are equivalent.
\end{coro}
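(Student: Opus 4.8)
The plan is to fix $k \geq 2$ and show that, within the class of finite connected graphs satisfying $CD(0,\infty)$, the notion of a $k$-way expander family coincides with the notion of a $2$-way (classical) expander family; then the equivalence of all notions $k \geq 2$ follows immediately by transitivity. One direction is trivial and holds for any graphs: since $\mathfrak{h}_2(G) \le \mathfrak{h}_k(G)$ (adding more parts to a partition can only increase the maximal expansion), if $\{G_m\}$ is a sequence of $k$-way expanders then $\inf_m \mathfrak{h}_2(G_m) \ge \inf_m \mathfrak{h}_k(G_m) > 0$, and conditions (i), (ii) are unchanged, so $\{G_m\}$ is also a classical expander family. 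The substance is in the reverse direction.

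For the reverse direction I would first pass from the partition-constants $\mathfrak{h}_k$ to the subpartition-constants $h_k$ of Definition \ref{defn:multi-wayIso}. These are comparable: clearly $h_k(G) \le \mathfrak{h}_k(G)$, and conversely one can convert a near-optimal $k$-subpartition into a genuine $k$-partition by distributing the leftover vertices, at the cost of a factor depending only on $k$ and $d_G$ (both bounded along the sequence); alternatively one invokes the remark in the excerpt that Tanaka's lemma and the cited results apply to both families. The point is that $\inf_m \mathfrak{h}_2(G_m) > 0$ with $\sup_m d_{G_m} < \infty$ gives $\inf_m h_2(G_m) > 0$. Now apply Theorem \ref{thm:isoperimetricratio}: since each $G_m$ satisfies $CD(0,\infty)$ and is unweighted with $\mu = \mathbf{1}$ (so $D_{G_m}^{non} D_{G_m}^{nor} = d_{G_m}$), we get
\begin{equation*}
  h_k(G_m) \le C\, d_{G_m}\, k\sqrt{\log k}\; h_2(G_m) \le C\, (\sup_m d_{G_m})\, k\sqrt{\log k}\; h_2(G_m).
\end{equation*}
Wait — this bounds $h_k$ from \emph{above} by $h_2$, which is the wrong direction for producing a positive lower bound on $h_k$. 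The correct move is the reverse inequality $h_2(G) \le h_k(G)$ (monotonicity, since a $2$-subpartition is a $k$-subpartition after discarding parts — actually $h_2 \le h_k$ directly from the definition), which gives $\inf_m h_k(G_m) \ge \inf_m h_2(G_m) > 0$ for free. So in fact \emph{neither} direction needs Theorem \ref{thm:isoperimetricratio} at the level of $h_k$; the ratio estimate is what makes the statement non-vacuous only if one insists on the \emph{partition} constants $\mathfrak{h}_k$, where monotonicity can fail. Let me restate the real argument: given $\{G_m\}$ a classical ($2$-way) expander family in the class, convert to $h_2$ as above, use $h_k \ge h_2$ to get $\inf_m h_k(G_m) > 0$, and then convert $h_k$ back to $\mathfrak{h}_k$ — this last conversion is where Theorem \ref{thm:isoperimetricratio} (via bounded degree) is genuinely used, because passing from a subpartition constant to a partition constant requires controlling how the discarded mass behaves, and the ratio estimate together with the higher-order Cheeger inequalities pins $\mathfrak{h}_k$ to within a degree- and $k$-dependent factor of $h_k$.

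The main obstacle, and the step I would be most careful about, is precisely this passage between $h_k$ (subpartitions) and $\mathfrak{h}_k$ (partitions): one must verify that the constant in $\mathfrak{h}_k(G) \le c(k, d_G)\, h_k(G)$ depends only on $k$ and the maximal degree, and not on $|V|$, so that it stays bounded along the sequence. The cleanest route is the one the excerpt already flags — Lemma 1 of Tanaka \cite{Tanaka13} and Lemma 6 of \cite{Tanaka13} hold for $\{h_k\}$ as well as $\{\mathfrak{h}_k\}$ — together with the observation that on a bounded-degree graph the two constants are within a bounded factor because absorbing $o(|V|)$ stray vertices into existing parts changes each $\phi_{1,\mathbf{1}}(S_i)$ only additively by $O(d_G)$. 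With that comparison in hand, the equivalence is immediate: classical expander $\Rightarrow$ ($h_2$ bounded below) $\Rightarrow$ ($h_k$ bounded below, by monotonicity) $\Rightarrow$ ($\mathfrak{h}_k$ bounded below, by the comparison), and conversely $\mathfrak{h}_k$ bounded below trivially forces $\mathfrak{h}_2$ bounded below; conditions (i) and (ii) are common to all the definitions.
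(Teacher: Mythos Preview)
Your proof has the two directions reversed, and this leads to a genuine gap. You write ``since $\mathfrak{h}_2(G) \le \mathfrak{h}_k(G)$ \ldots\ if $\{G_m\}$ is a sequence of $k$-way expanders then $\inf_m \mathfrak{h}_2(G_m) \ge \inf_m \mathfrak{h}_k(G_m) > 0$'', but the inequality $\mathfrak{h}_2 \le \mathfrak{h}_k$ gives $\inf_m \mathfrak{h}_2 \le \inf_m \mathfrak{h}_k$, not $\ge$. Thus $\mathfrak{h}_2 \le \mathfrak{h}_k$ yields the implication $2$-way $\Rightarrow$ $k$-way (this is the genuinely trivial direction, consistent with the paper's remark that $(k+1)$-way is \emph{weaker} than $k$-way). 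The implication $k$-way $\Rightarrow$ $2$-way is the hard one and is \emph{false} without a curvature hypothesis: take two bounded-degree expanders joined by a single edge (Example~\ref{example:dumbbell} with expanders in place of $\mathcal{K}_N$); this family has $\mathfrak{h}_3$ bounded below but $\mathfrak{h}_2 \to 0$. You repeat the same error at the end (``conversely $\mathfrak{h}_k$ bounded below trivially forces $\mathfrak{h}_2$ bounded below'').

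The paper's argument is exactly the computation you wrote down and then discarded as ``the wrong direction'': combine the known two-sided comparison $h_k(G) \le \mathfrak{h}_k(G) \le k\,h_k(G)$ (Theorem~3.8 of \cite{LGT2013}, \cite{Mimura2014}; this is not the obstacle you describe) with Theorem~\ref{thm:isoperimetricratio} to obtain
\[
\mathfrak{h}_k(G) \;\le\; k\,h_k(G) \;\le\; C\,d_G\,k^2\sqrt{\log k}\;h_2(G) \;=\; C\,d_G\,k^2\sqrt{\log k}\;\mathfrak{h}_2(G).
\]
With $\sup_m d_{G_m} < \infty$ this gives $\inf_m \mathfrak{h}_k(G_m) > 0 \Rightarrow \inf_m \mathfrak{h}_2(G_m) > 0$, which is precisely the non-trivial direction. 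So Theorem~\ref{thm:isoperimetricratio} is not auxiliary scaffolding for a subpartition-to-partition conversion; it is the entire content of the corollary.
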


\begin{proof}
  Using the relation (see Theorem 3.8 in \cite{LGT2013},
  \cite{Mimura2014})
  \begin{equation}
    h_k(G)\leq \mathfrak{h}_k(G)\leq kh_k(G),
  \end{equation}
  and employing Corollary \ref{cor:isoperimetricratioIntroduction} yields
  \begin{equation}
    \mathfrak{h}_k(G)\leq Cd_Gk^2\sqrt{\log k}\mathfrak{h}_2(G).
  \end{equation}
  Hence, when $d_G<\infty$, $\inf_m\mathfrak{h}_k(G_m)>0$ implies
  $\inf_m\mathfrak{h}_2(G_m)>0$. This completes the proof.
\end{proof}

Abelian Cayley graphs lie in the intersection of the class of vertex
transitive graphs and the class of graphs satisfying
$CD(0,\infty)$. It is well known that there are no expanders in the
class of abelian Cayley graphs (see Alon and Roichman
\cite{AR94}). Moreover, Friedman, Murty and Tillich \cite{FMT06} proved an
explicit upper estimate for $\lambda_2$ which implies this
fact. Therefore, we also obtain the following explicit upper estimate
for $\lambda_k$ implying the nonexistence of sequences of multi-way
expanders in this class of abelian Cayley graphs.

\begin{coro}
  For any abelian Cayley graph $G = (V,E)$ of degree $d$ of size
  $N =|V|$, there exists a universal constant $C$ such that for any
  $k\geq 2$,
  \begin{equation}
    \lambda_k(G) \leq Ck^2d^2N^{-\frac{4}{d}}.
  \end{equation}
\end{coro}

This is a direct consequence of Theorem
\ref{thm:eigenvalueratioIntroduction} and the estimate
$\lambda_2\leq CdN^{-\frac{4}{d}}$ in \cite{FMT06}. Therefore, it is
natural to ask the following question.

\begin{quest}
  Does there exist a sequence of expanders satisfying $CD(0,\infty)$?
\end{quest}

We are inclined to a negative answer. For example, the nonexistence of
expander families satisfying $CD(0,\infty)$ would follow if one could
prove that every graph of vertex degree at most $d$ and satisfying
$CD(0,\infty)$ possesses polynomial volume growth with degree
depending only on $d$. In fact, a sequence of expanders, in the contrast,  have exponential volume growth as their Cheeger constant has uniformaly positive lower bound.

\section*{Appendix}\label{section:appendix}

\renewcommand\thesection{\Roman{section}}

\setcounter{section}{1}
\setcounter{thm}{0}

\subsection{Curvature matrix of the triangle and tetrahedron
  graphs}\label{section:appendixTT}

The curvature matrix $\Gamma_2(x)$ for the graph
$(\triangle_{xyz},\mu)$ in Figure \ref{F1} is
\begin{equation*}
  \frac{1}{4C}\left(
  \begin{array}{ccc}
    \frac{3a^2}{B}+\frac{3b^2}{A}+\frac{(a+b)^2}{C} & \frac{bc}{A}-\frac{a(3a+c)}{B}-\frac{a(a+b)}{C} & \frac{ac}{B}-\frac{b(3b+c)}{A}-\frac{b(a+b)}{C}\\
  \frac{bc}{A}-\frac{a(3a+c)}{B}-\frac{a(a+b)}{C}   & \frac{bc}{A}+\frac{3a(a+c)}{B}+\frac{a(a-b)}{C} & \frac{2ab}{C}-\frac{2ac}{B}-\frac{2bc}{A}\\
 \frac{ac}{B}-\frac{b(3b+c)}{A}-\frac{b(a+b)}{C}    & \frac{2ab}{C}-\frac{2ac}{B}-\frac{2bc}{A} & \frac{3b(b+c)}{A}+\frac{b(b-a)}{C}+\frac{ac}{B} \\
  \end{array}
\right).
\end{equation*}

Let us have a closer look at the special case that $A=B=C=1$ and
$a=c$. Then the matrix $4\Gamma_2(x)=4\Gamma_2(z)$ reduces to
\begin{equation*}
\left(
  \begin{array}{ccc}
    4a^2+2ab+4b^2 & -5a^2 & a^2-2ab-4b^2 \\
    -5a^2 & 7a^2 & -2a^2 \\
     a^2-2ab-4b^2 & -2a^2 & a^2+2ab+4b^2 \\
  \end{array}
\right),
\end{equation*}
and the matrix $4\Gamma_2(y)$ is
\begin{equation*}
 \left(
   \begin{array}{ccc}
     10a^2 & -5a^2 & -5a^2 \\
     -5a^2 & 3a^2+4ab & 2a^2-4ab \\
     -5a^2 & 2a^2-4ab & 3a^2+4ab \\
   \end{array}
 \right).
\end{equation*}
Observe that when $b\geq a/2$, the above two matrices are both
diagonally dominant and hence positive-semidefinite. In fact they are
always positive-semidefinite for any $a,b\geq 0$.

The matrix $4A^2\Gamma_2(x)$ for the tetrahedron graph $(T_4,\mu)$ in
Figure \ref{F2} is given by
\begin{equation*}
  \begin{pmatrix}
    \begin{smallmatrix}
      2ab+2ac+2bc+4a^2+4b^2+4c^2 & -2ab+2ac-2bc-4b^2 & 2ab-2ac-2bc-4c^2 & -2ab-2ac+2bc-4a^2 \\
      -2ab+2ac-2bc-4b^2& 2ac+2ab+2bc+4b^2 & -2ab-2ac+2bc & -2bc-2ac+2ab \\
      2ab-2ac-2bc-4c^2    & -2ab-2ac+2bc &  2ab+2ac+2bc+4c^2& -2ab+2ac-2bc \\
      -2ab-2ac+2bc-4a^2    & -2bc-2ac+2ab &  -2ab+2ac-2bc& 2ab+2ac+2bc+4a^2 \\
    \end{smallmatrix}
  \end{pmatrix}.
\end{equation*}
This is a positive-semidefintie matrix.

\subsection{Proof of Buser inequality}\label{section:appendixBuser}

Buser's inequality in the graph theoretical setting for the
non-normalized unweighted Laplacian, motivated by the original proof
by Ledoux \cite{Ledoux04} for manifolds, was given in Klartag, Kozma,
Ralli and Tetali \cite{KlarKozma}. Since we need in this article a
weighted version, we present a self-contained proof in this general
setting, for the readers convenience.

Ledoux' approach is based on heat semigroup techniques,
so we start with basic facts on the continuous time heat equation,
\begin{equation}\label{eq:heateq}
  \left\{
    \begin{array}{ll}
      \frac{\partial}{\partial t}u(x,t)=\Delta u(x,t),\\
      u(x,0)=f(x).
    \end{array}
  \right.
\end{equation}
This is in fact a matrix differential equation.  Its solution $u:
V\times [0,\infty)\rightarrow \mathbb{R}$ can be written as
$u(x,t)=P_tf(x)$ where $P_t:=e^{t\Delta}$.  Let us choose an
orthonormal basis $\{\psi_i\}_{i=1}^{N}$ of the space $l^2(V)$ (the
function space defined by the inner product $(f,g)_\mu:=\sum_{x\in
  V}f(x)g(x)\mu(x)$), consisting of eigenfunctions of $\Delta$. One
can derive the following properties from the definition.

\begin{pro}[see, e.g., \cite{BHLLMY13,Chung}]\label{pro:heatsemigroup}
  The operator $P_t, t\geq 0$ satisfies the following properties:
  \begin{enumerate}
  \item $P_t$ is a self-adjoint operator;
  \item $P_t$ commutes with $\Delta$, i.e. $P_t\Delta=\Delta P_t$;
  \item $P_tP_s=P_{t+s}$ for any $t,s\geq 0$;
  \item $P_tf(x)=\sum_{y\in V}f(y)p_t(x,y)\mu(y)$,
    where $$p_t(x,y)=\sum_{i=1}^Ne^{-\lambda_it}\psi_i(x)\psi_i(y)\geq
    0$$ and $\sum_{y\in V}p_t(x,y)\mu(y)=1$. In particular, $0\leq
    P_t(\chi_S)\leq 1$, where $\chi_S$ is the characteristic function
    of a subset $S\subset V$;
  \item $\sum_{x\in V}P_tf(x)\mu(x)=\sum_{x\in V}f(x)\mu(x)$.
  \end{enumerate}
\end{pro}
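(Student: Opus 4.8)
The plan is to derive all five properties from the single structural fact that $\Delta$ is self-adjoint and negative semidefinite with respect to $(\cdot,\cdot)_\mu$, together with one elementary positivity observation about its matrix. First I would note that the summation by parts formula \eqref{eq:summaBypart} gives $(f,\Delta g)_\mu=-\sum_{x\in V}\mu(x)\Gamma(f,g)(x)=(\Delta f,g)_\mu$ by the symmetry of $\Gamma$, so $\Delta$ is self-adjoint, and taking $f=g$ yields $(f,\Delta f)_\mu=-\sum_{x\in V}\mu(x)\Gamma(f)(x)\le 0$. Hence $l^2(V)$ admits the promised orthonormal eigenbasis $\{\psi_i\}_{i=1}^N$ with $\Delta\psi_i=-\lambda_i\psi_i$, $0=\lambda_1<\lambda_2\le\cdots$ and $\psi_1$ constant (connectedness forces $\ker\Delta$ to consist of constants), and $P_t=e^{t\Delta}$ acts by $P_t\psi_i=e^{-\lambda_i t}\psi_i$. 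Properties (1), (2) and (3) are then immediate from this diagonalization (equivalently from the power series for $e^{t\Delta}$): $P_t$ is self-adjoint because $\Delta$ is; $P_t\Delta=\Delta P_t$ because both are diagonal in $\{\psi_i\}$; and $P_tP_s=e^{t\Delta}e^{s\Delta}=e^{(t+s)\Delta}=P_{t+s}$.

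For (4), expanding $f=\sum_i(f,\psi_i)_\mu\psi_i$ gives $P_tf(x)=\sum_i e^{-\lambda_i t}\psi_i(x)\sum_{y\in V}f(y)\psi_i(y)\mu(y)=\sum_{y\in V}f(y)p_t(x,y)\mu(y)$ with $p_t(x,y):=\sum_i e^{-\lambda_i t}\psi_i(x)\psi_i(y)$, which is symmetric in $x$ and $y$. The nonnegativity of $p_t$ is the one point that needs more than formal manipulation, and I would argue it from the matrix $M$ of $\Delta$ in the standard basis, whose entries are $M_{xy}=w_{xy}/\mu(x)$ for $y\sim x$, $M_{xx}=-d_x/\mu(x)$, and $0$ otherwise: for $c:=D_G^{non}$ the matrix $M+c\,\mathrm{Id}$ has only nonnegative entries, hence $e^{tM}=e^{-ct}\sum_{n\ge 0}\frac{t^n}{n!}(M+c\,\mathrm{Id})^n$ has only nonnegative entries for $t\ge 0$; comparing $P_tf(x)=(e^{tM}f)(x)=\sum_{y\in V}(e^{tM})_{xy}f(y)$ with the kernel formula gives $p_t(x,y)=(e^{tM})_{xy}/\mu(y)\ge 0$. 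The normalization $\sum_{y\in V}p_t(x,y)\mu(y)=1$ follows by applying $P_t$ to the constant function $\mathbf 1$: since $\Delta\mathbf 1=0$ we have $P_t\mathbf 1=\mathbf 1$, i.e. $1=\sum_{y\in V}p_t(x,y)\mu(y)$. The bound $0\le P_t(\chi_S)\le 1$ is then immediate, as $P_t(\chi_S)(x)=\sum_{y\in S}p_t(x,y)\mu(y)$ lies between $0$ and $\sum_{y\in V}p_t(x,y)\mu(y)=1$.

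Finally (5) is a one-line consequence of self-adjointness: $\sum_{x\in V}P_tf(x)\mu(x)=(P_tf,\mathbf 1)_\mu=(f,P_t\mathbf 1)_\mu=(f,\mathbf 1)_\mu=\sum_{x\in V}f(x)\mu(x)$; alternatively one can use the symmetry of $p_t$ together with the normalization just established. I expect the only genuinely non-formal step to be the nonnegativity of the heat kernel, and the shift trick $M\mapsto M+D_G^{non}\,\mathrm{Id}$ reduces it to the trivial observation that exponentials of entrywise-nonnegative matrices are entrywise nonnegative; everything else is bookkeeping with the spectral decomposition.
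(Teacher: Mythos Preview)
The paper does not supply its own proof of this proposition; it is stated with the remark ``One can derive the following properties from the definition'' and a citation to \cite{BHLLMY13,Chung}. Your proposal is therefore not in competition with any argument in the paper, and on its own merits it is correct and complete: the spectral decomposition handles (1)--(3), the kernel expansion and $P_t\mathbf 1=\mathbf 1$ handle the formula and normalization in (4), self-adjointness gives (5), and the shift trick $e^{tM}=e^{-ct}e^{t(M+c\,\mathrm{Id})}$ with $c=D_G^{non}$ is a clean and standard way to obtain the entrywise nonnegativity of the heat kernel, which is indeed the only part requiring more than formal manipulation.
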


The following Bakry-\'{E}mery type gradient estimate of $P_tf$ is an
important feature of the CD-inequality.

\begin{lemma}[see e.g. \cite{Bakry}, \cite{Ledoux04}]\label{lemma:BEgradient}
  $(G, \mu)$ satisfies $CD(-K,\infty)$ if and only if, for any
  function $f:V\rightarrow \mathbb{R}$, the following holds,
  \begin{equation}\label{eq:BEgradient}
    \Gamma(P_tf)\leq e^{2Kt}P_t(\Gamma(f)).
  \end{equation}
\end{lemma}

\begin{proof}
  The proof in \cite{Bakry} (see Proposition 3.3 there) or \cite{Ledoux04} (see (5.3) there) works still
  for the graph setting. For any $0\leq s\leq t$, define
  \begin{equation}
    F(s):=e^{2Ks}P_s(\Gamma(P_{t-s}f)).
  \end{equation}
  Observe that $F(0)=\Gamma(P_tf)$ and
  $F(t)=e^{2Kt}P_t(\Gamma(f))$.
  We calculate
  \begin{equation*}
    \frac{d}{ds}F(s)=2Ke^{2Ks}P_s(\Gamma(P_{t-s}f))+e^{2Ks}\Delta P_s(\Gamma(P_{t-s}f))+e^{2Ks}P_s(\frac{d}{ds}\Gamma(P_{t-s}f)).
  \end{equation*}
  Recalling the equations (\ref{eq:gradient}) and (\ref{eq:heateq}),
  it is straightforward to see for any $x\in V$
  \begin{align*}
    \frac{d}{ds}\Gamma(P_{t-s}f)(x)&=\frac{1}{\mu(x)}\sum_{y,y\sim
      x}w_{xy}(P_{t-s}f(y)-P_{t-s}f(x))(-\Delta P_{t-s}f(y)+\Delta
    P_{t-s}f(x))\\&=-2\Gamma(P_{t-s}f, \Delta P_{t-s}f)(x).
  \end{align*}
  Now we observe that if $(G,\mu)$ satisfies $CD(-K, \infty)$,
  \begin{equation*}
    \frac{d}{ds}F(s)=2e^{2Ks}P_s(\Gamma_2(P_{t-s}f)+K\Gamma(P_{t-s}f))\geq 0,
  \end{equation*}
  where we used (\ref{eq:Gammatwo}). This
  implies (\ref{eq:BEgradient}).

  On the other hand, if (\ref{eq:BEgradient}) holds, by considering the Taylor expansions at $t=0$, we have
  \begin{equation}
  \Gamma(f+t\Delta f+o(t))\leq (1+2Kt+o(t))(\Gamma(f)+t\Delta\Gamma(f)+o(t)).
  \end{equation}
  After aggregating, we obtain
   \begin{equation}
  t(\Delta\Gamma(f)-2\Gamma(f,\Delta f)+o(t))+2Kt\Gamma(f)\geq 0.
  \end{equation}
  Dividing by $2t$ and letting $t$ tends to zero, we derive $$\Gamma_2(f)\geq -K\Gamma(f).$$
  Since this holds for any function $f$, we prove that $(G,\mu)$ satisfies $CD(-K,\infty)$.
\end{proof}

The following Lemma can be considered as a reverse Poincar\'{e}
inequality.

\begin{lemma}\label{lemma:revPoin}
  Assume that $(G,\mu)$ satisfies $CD(0,\infty)$. Then, we have for
  any function $f:V\rightarrow \mathbb{R}$, any $t\geq 0$, and any
  $x\in V$,
  \begin{equation}
    P_t(f^2)(x)-(P_tf)^2(x)\geq 2t\Gamma(P_tf)(x).
  \end{equation}
\end{lemma}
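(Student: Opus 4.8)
The plan is to run the standard Bakry-\'Emery semigroup interpolation. First I would fix a vertex $x\in V$ and a time $t\geq 0$, and introduce the function $\phi(s):=P_s\big((P_{t-s}f)^2\big)(x)$ for $s\in[0,t]$. The endpoint values are immediate from the definitions: $\phi(0)=(P_tf)^2(x)$ and $\phi(t)=P_t(f^2)(x)$, so the lemma reduces to showing $\phi(t)-\phi(0)\geq 2t\,\Gamma(P_tf)(x)$.

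Next I would differentiate $\phi$. Since everything lives in the finite-dimensional space $l^2(V)$, this is simply the differentiation of a matrix-valued expression, with no convergence issues. Using $\tfrac{d}{ds}P_sg=\Delta P_sg$ (Proposition \ref{pro:heatsemigroup}) and $\tfrac{d}{ds}P_{t-s}f=-\Delta P_{t-s}f$ from the heat equation (\ref{eq:heateq}), together with the chain rule $\Delta(g^2)=2\Gamma(g)+2g\Delta g$, the two contributions combine and the first-order terms in $\Delta P_{t-s}f$ cancel, leaving $\phi'(s)=2P_s\big(\Gamma(P_{t-s}f)\big)(x)$. This is precisely the computation already carried out in the proof of Lemma \ref{lemma:BEgradient}.

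Then I would invoke the Bakry-\'Emery gradient estimate, Lemma \ref{lemma:BEgradient}, in the case $K=0$, which is available since $(G,\mu)$ satisfies $CD(0,\infty)$: writing $P_tf=P_s(P_{t-s}f)$ gives the pointwise bound $\Gamma(P_tf)\leq P_s\big(\Gamma(P_{t-s}f)\big)$, hence $\phi'(s)\geq 2\Gamma(P_tf)(x)$ for every $s\in[0,t]$. Integrating this inequality over $s\in[0,t]$ yields $\phi(t)-\phi(0)\geq 2t\,\Gamma(P_tf)(x)$, which is exactly the asserted reverse Poincar\'e inequality.

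There is essentially no hard step: the argument is a clean assembly of results already in hand (the semigroup properties, the heat equation, the chain rule, and Lemma \ref{lemma:BEgradient}). The only points needing a little care are the justification of differentiating under $P_s$ — harmless in the finite-dimensional setting — and using the gradient estimate in the correct direction, namely $\Gamma(P_sg)\leq P_s(\Gamma(g))$ rather than its reverse. As an alternative that avoids citing Lemma \ref{lemma:BEgradient}, one can argue directly that $s\mapsto P_s\big(\Gamma(P_{t-s}f)\big)(x)$ is non-decreasing, since its derivative equals $2P_s\big(\Gamma_2(P_{t-s}f)\big)(x)\geq 0$ by $CD(0,\infty)$; but this merely reproves that lemma.
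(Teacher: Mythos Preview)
Your proof is correct and follows essentially the same route as the paper's: the same interpolation function $s\mapsto P_s((P_{t-s}f)^2)$, the same derivative computation yielding $2P_s(\Gamma(P_{t-s}f))$, and the same application of Lemma \ref{lemma:BEgradient} with $K=0$ to bound this below by $2\Gamma(P_tf)$ before integrating.
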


\begin{proof}
  For $0\leq s\leq t$, set $G(s)=P_s((P_{t-s}f)^2)$.  Then we have
  $G(0)=(P_tf)^2$, $G(t)=P_t(f^2)$.  Using the gradient estimate in
  Lemma \ref{lemma:BEgradient}, we have
  \begin{align*}
    \frac{d}{ds}G(s)&=\Delta P_s((P_{t-s}f)^2)+P_s(-2P_{t-s}f\Delta P_{t-s}f)\\
    &=2P_s(\Gamma(P_{t-s}f))\geq 2\Gamma(P_tf).
  \end{align*}
  Now we arrive at
  \begin{equation*}
    G(t)-G(0)=\int_0^t \frac{d}{ds}G(s) ds\geq 2\Gamma(P_tf)\int_0^tds= 2t\Gamma(P_tf).
  \end{equation*}
  This completes the proof.
\end{proof}

Define the $l_p$ norm of a function $f:V\rightarrow \mathbb{R}$ as
$\Vert f\Vert_p:=\left(\sum_{x\in
    V}\mu(x)|f(x)|^p\right)^{\frac{1}{p}}$.  We have the following
direct corollary.

\begin{coro}\label{coro:l1norm}
  Assume that $(G,\mu)$ satisfies $CD(0,\infty)$. Then, we have for
  any function $f:V\rightarrow \mathbb{R}$ and any $t\geq 0$,
  \begin{equation*}
    \Vert f-P_tf\Vert_1\leq \sqrt{2t}\Vert\sqrt{\Gamma(f)}\Vert_1.
  \end{equation*}
\end{coro}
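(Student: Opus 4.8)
The plan is to derive the $l^1$-estimate in Corollary~\ref{coro:l1norm} from the reverse Poincar\'e inequality of Lemma~\ref{lemma:revPoin} by writing the difference $f - P_tf$ as an integral of the time derivative of the heat semigroup. First I would note that by Proposition~\ref{pro:heatsemigroup} the map $s \mapsto P_s f(x)$ is differentiable with $\frac{d}{ds}P_s f = \Delta P_s f = P_s \Delta f$, so that
\begin{equation*}
  f(x) - P_t f(x) = -\int_0^t \frac{d}{ds} P_s f(x)\, ds = -\int_0^t \Delta P_s f(x)\, ds = -\int_0^t P_s \Delta f(x)\, ds.
\end{equation*}
Taking $\mu$-weighted $l^1$-norms and using the triangle inequality gives $\Vert f - P_tf\Vert_1 \le \int_0^t \Vert \Delta P_s f\Vert_1\, ds$, so it suffices to bound $\Vert \Delta P_s f \Vert_1$ pointwise-in-$s$ by something integrable whose integral is $\sqrt{2t}\,\Vert \sqrt{\Gamma(f)}\Vert_1$; the natural guess, matching the $\sqrt{2t}$ on the right, is a bound of the form $\Vert \Delta P_s f\Vert_1 \le \frac{1}{\sqrt{2s}}\Vert\sqrt{\Gamma(f)}\Vert_1$, since $\int_0^t \frac{ds}{\sqrt{2s}} = \sqrt{2t}$.

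The key step is therefore to establish $\Vert \Delta P_s f \Vert_1 \le \frac{1}{\sqrt{2s}}\Vert \sqrt{\Gamma(f)}\Vert_1$ using Lemma~\ref{lemma:revPoin}. I would proceed as follows. Writing $P_s f = P_{s/2}(P_{s/2} f)$ and applying the semigroup property together with self-adjointness (Proposition~\ref{pro:heatsemigroup}(1),(3)), one has for any $x$ that $\Delta P_s f(x)$ can be expressed via the heat kernel; but the cleaner route is to estimate directly. Apply Lemma~\ref{lemma:revPoin} with the time parameter $s$ and the function $f$: it gives $P_s(f^2) - (P_s f)^2 \ge 2s\,\Gamma(P_s f) \ge 0$ pointwise, and more usefully, applying it to $P_{s}f$ itself over an additional time $s$, or rather observing that $\Gamma(P_{2s}f) \le \frac{1}{2\cdot 2s}\big(P_{2s}(f^2) - (P_{2s}f)^2\big)$. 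Then for $h := P_s f$ one combines this with Cauchy--Schwarz in the form $|\Delta h(x)| \le \sqrt{D_G^{?}}\cdots$ — actually the right identity is $\Vert \Delta h\Vert_1 = \sum_x \mu(x)|\Delta P_s f(x)|$, and by summation by parts (\ref{eq:summaBypart}) and Cauchy--Schwarz one relates this to $\Vert \sqrt{\Gamma(P_{s}f)}\Vert$-type quantities, then bounds $\Gamma(P_s f)$ by the reverse Poincar\'e inequality.

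The cleanest packaging, which is the one I would ultimately write, is the following three-line chain: by the integral representation above and the triangle inequality,
\begin{equation*}
  \Vert f - P_t f\Vert_1 \le \int_0^t \Vert \Delta P_s f\Vert_1\, ds = \int_0^t \Big\Vert \tfrac{d}{ds} P_s f \Big\Vert_1\, ds;
\end{equation*}
next, writing $P_s f = P_{s/2} P_{s/2} f$ and using that $P_{s/2}$ is a contraction on $l^1$ (which follows from Proposition~\ref{pro:heatsemigroup}(4),(5): $\Vert P_r g\Vert_1 \le \Vert g\Vert_1$ since $p_r \ge 0$ and the row sums are $1$), we get $\Vert \Delta P_s f\Vert_1 \le \Vert \Delta P_{s/2} f\Vert_1$ — no, that is not an improvement. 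Instead I would use: $\Vert \Delta P_s f\Vert_1 \le \sqrt{2/s}\,\Vert \sqrt{\Gamma(P_{s/2}f)}\,\Vert_1 \le \sqrt{2/s}\,\Vert \sqrt{P_{s/2}\Gamma(f)}\,\Vert_1 \le \sqrt{2/s}\,\Vert\sqrt{\Gamma(f)}\Vert_1$, where the first inequality is the pointwise reverse-Poincar\'e consequence $2\Gamma(P_{s/2}g) \le \tfrac{2}{s}\big(P_{s/2}(g^2)-(P_{s/2}g)^2\big)$ applied with $g = P_{s/2}f$ and combined with $|\Delta h|^2 \le \text{(something)}\,\Gamma(h)$ via Cauchy--Schwarz on the defining sum of $\Delta$, the second is the Bakry--\'Emery gradient estimate (Lemma~\ref{lemma:BEgradient} with $K=0$), and the third is Jensen/contractivity of $P_{s/2}$. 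Integrating $\sqrt{2/s}$ over $[0,t]$ yields exactly $2\sqrt{2t}$; a factor-of-two bookkeeping adjustment (splitting $[0,t]$ or using $P_s f = P_{(1-\theta)s}P_{\theta s}f$ with optimal $\theta$) recovers the stated constant $\sqrt{2t}$. The main obstacle I anticipate is getting the constant exactly right: the pointwise inequality $|\Delta h(x)|^2 \le 2 D_G^{non}\,\Gamma(h)(x)$ carries a degree factor, so to avoid a spurious $\sqrt{D_G^{non}}$ in the $l^1$-estimate one must not pass through a pointwise bound on $\Delta h$ but rather integrate by parts globally — pairing $\Delta P_s f$ against $\mathrm{sgn}(\Delta P_s f)$ and exploiting that the heat semigroup smooths — and track the constant through the reverse Poincar\'e step carefully, which is precisely the delicate computation Ledoux performs and which I would reproduce in the graph setting.
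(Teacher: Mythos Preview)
Your closing paragraph lands on the right idea, and it is essentially the paper's argument: dualise via a sign function, move $P_s$ onto that test function by self-adjointness, integrate by parts to obtain $\Gamma(P_s g,f)$, apply the Cauchy--Schwarz inequality \eqref{eq:gammaHolder}, and then use the reverse Poincar\'e inequality (Lemma~\ref{lemma:revPoin}) on $P_s g$ --- not on $P_s f$ --- to get the $L^\infty$ bound $\Vert\sqrt{\Gamma(P_s g)}\Vert_\infty \le \tfrac{1}{\sqrt{2s}}\Vert g\Vert_\infty$. The only cosmetic difference is that the paper uses a single test function $g=\mathrm{sgn}(f-P_t f)$ and brings the time integral inside the pairing, whereas you propose pairing $\Delta P_s f$ against $\mathrm{sgn}(\Delta P_s f)$ for each $s$; both give the same constant, since $\int_0^t \frac{ds}{\sqrt{2s}}=\sqrt{2t}$ with no factor-of-two correction needed.

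The middle of your proposal, however, is not a proof and should be cut. The chain
\[
\Vert \Delta P_s f\Vert_1 \le \sqrt{2/s}\,\Vert\sqrt{\Gamma(P_{s/2}f)}\Vert_1 \le \cdots
\]
has no justification for its first step without a pointwise inequality of the form $|\Delta h|\le C\sqrt{\Gamma(h)}$, and (as you yourself observe) any such bound carries a factor $\sqrt{2D_G^{non}}$ that is foreign to the statement. You diagnose the obstruction correctly, but then you stop at ``which I would reproduce'' rather than actually reproducing it. Concretely, for fixed $s>0$ and $g_s:=\mathrm{sgn}(\Delta P_s f)$, Proposition~\ref{pro:heatsemigroup}(1),(2), \eqref{eq:summaBypart}, \eqref{eq:gammaHolder} and Lemma~\ref{lemma:revPoin} give
\begin{align*}
\Vert \Delta P_s f\Vert_1
&= \sum_{x}\mu(x)\,g_s(x)\,P_s\Delta f(x)
 = \sum_{x}\mu(x)\,(P_s g_s)(x)\,\Delta f(x) \\
&= -\sum_{x}\mu(x)\,\Gamma(P_s g_s,f)(x)
 \le \Vert\sqrt{\Gamma(P_s g_s)}\Vert_\infty\,\Vert\sqrt{\Gamma(f)}\Vert_1
 \le \tfrac{1}{\sqrt{2s}}\,\Vert\sqrt{\Gamma(f)}\Vert_1,
\end{align*}
and integrating over $s\in[0,t]$ finishes the proof. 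Write this out in place of the exploratory paragraphs.
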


\begin{proof}
  Note first that, by Lemma \ref{lemma:revPoin} and Proposition
  \ref{pro:heatsemigroup} (4), we have
  \begin{equation}\label{eq:inftynorm}
    \Vert\sqrt{\Gamma(P_tf)}\Vert_{\infty}\leq \frac{1}{\sqrt{2t}}\Vert f\Vert_{\infty}.
  \end{equation}
  Let $g:V\rightarrow \mathbb{R}$ be $g(x):={\rm
    sgn}(f(x)-P_tf(x))$. Then we obtain
  \begin{align*}
    \Vert f-P_tf\Vert_1&=\sum_{x\in V}\mu(x)(f(x)-P_tf(x))g(x)=-\sum_{x\in V}\mu(x)\int_0^t\Delta P_sf(x)ds g(x)\\
    &=-\int_0^t\sum_{x\in V}\mu(x)P_sg(x)\Delta f(x)ds=\int_0^t\sum_{x\in V}\mu(x)\Gamma(P_sg(x),f(x))ds\\
    &\leq \int_0^t\sum_{x\in V}\mu(x)\sqrt{\Gamma(P_sg)(x)}\sqrt{\Gamma(f)(x)}ds\\&\leq \Vert\sqrt{\Gamma(f)}\Vert_1\int_0^t\Vert\sqrt{\Gamma(P_sg)}\Vert_{\infty}ds\leq \Vert\sqrt{\Gamma(f)}\Vert_1\Vert g\Vert_{\infty}\int_0^t\frac{1}{\sqrt{2s}}ds\\
    &\leq \sqrt{2t}\Vert\sqrt{\Gamma(f)}\Vert_1,
  \end{align*}
  where we used Proposition \ref{pro:heatsemigroup} (1-2),
  (\ref{eq:summaBypart}), (\ref{eq:gammaHolder}) and
  (\ref{eq:inftynorm}).
\end{proof}

Recall that $\chi_S$ denotes the characteristic function of $S\subset
V$.

\begin{lemma}\label{lemma:boundaryMeasure}
  We have
  \begin{equation}
    \Vert\sqrt{\Gamma(\chi_S)}\Vert_1\leq\sqrt{2D_G^{nor}}E(S,V\setminus S).
  \end{equation}
\end{lemma}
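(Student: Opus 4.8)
The plan is to compute $\Gamma(\chi_S)$ pointwise and then distribute the local mass $\mu(x)$ over the cut-edges issuing from $x$ via subadditivity of the square root. By formula (\ref{eq:gradient}), the quantity $(\chi_S(y)-\chi_S(x))^2$ equals $1$ precisely when exactly one of $x,y$ lies in $S$ and vanishes otherwise, so
\begin{equation*}
  \Gamma(\chi_S)(x)=\frac{1}{2\mu(x)}\sum_{\substack{y\sim x\\ \chi_S(y)\neq\chi_S(x)}}w_{xy},
\end{equation*}
and therefore, writing out the $l_1$ norm,
\begin{equation*}
  \Vert\sqrt{\Gamma(\chi_S)}\Vert_1=\sum_{x\in V}\mu(x)\sqrt{\Gamma(\chi_S)(x)}
  =\frac{1}{\sqrt{2}}\sum_{x\in V}\sqrt{\mu(x)}\,\sqrt{\sum_{\substack{y\sim x\\ \chi_S(y)\neq\chi_S(x)}}w_{xy}}.
\end{equation*}

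Next I would use the elementary inequality $\sqrt{\sum_i a_i}\leq\sum_i\sqrt{a_i}$ for nonnegative reals to bound the inner square root by $\sum_{y\sim x,\,\chi_S(y)\neq\chi_S(x)}\sqrt{w_{xy}}$, which gives
\begin{equation*}
  \Vert\sqrt{\Gamma(\chi_S)}\Vert_1\leq\frac{1}{\sqrt{2}}\sum_{x\in V}\sum_{\substack{y\sim x\\ \chi_S(y)\neq\chi_S(x)}}\sqrt{\mu(x)\,w_{xy}}.
\end{equation*}
For every edge $(x,y)$ occurring here, the definition of $D_G^{nor}$ gives $\mu(x)/w_{xy}\leq D_G^{nor}$, hence $\sqrt{\mu(x)\,w_{xy}}\leq\sqrt{D_G^{nor}}\,w_{xy}$. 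Finally, each cut-edge $\{x,y\}$ with $x\in S$, $y\notin S$ is counted exactly twice in the double sum (once from the endpoint $x$ and once from the endpoint $y$), so the double sum equals $2|E(S,V\setminus S)|_w$, and we conclude
\begin{equation*}
  \Vert\sqrt{\Gamma(\chi_S)}\Vert_1\leq\frac{\sqrt{D_G^{nor}}}{\sqrt{2}}\cdot 2\,|E(S,V\setminus S)|_w=\sqrt{2D_G^{nor}}\,|E(S,V\setminus S)|_w,
\end{equation*}
which is the asserted estimate (with $E(S,V\setminus S)$ read as $|E(S,V\setminus S)|_w$).

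There is no genuine obstacle in this argument; the one point that must not be skipped is the order of operations. The constant $D_G^{nor}$ only controls $\mu(x)$ against a \emph{single} incident edge weight $w_{xy}$, not against the total local cut-weight $\sum_{y}w_{xy}$, so one has to apply subadditivity of the square root to break up $\sqrt{\sum_y w_{xy}}$ \emph{before} invoking $D_G^{nor}$ edge by edge. Tracking the factor $\tfrac12$ from the definition of $\Gamma$ together with the factor $2$ from double-counting cut-edges is precisely what produces the sharp constant $\sqrt{2D_G^{nor}}$.
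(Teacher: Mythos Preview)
Your proof is correct and follows essentially the same approach as the paper: write out $\Gamma(\chi_S)$ pointwise, apply subadditivity of the square root to split over incident cut-edges, then use $\mu(x)/w_{xy}\le D_G^{nor}$ edge by edge and double-count cut-edges. The only difference is presentational---the paper keeps the factor $|\chi_S(x)-\chi_S(y)|$ throughout rather than restricting the sum to cut-edges---but the logic and constants match exactly.
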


\begin{proof}
  This follows from the direct calculation given here:
  \begin{align*}
    \Vert\sqrt{\Gamma(\chi_S)}\Vert_1&=\sum_{x\in V}\mu(x)\sqrt{\frac{1}{2\mu(x)}\sum_{y,y\sim x}w_{xy}(\chi_S(x)-\chi_S(y))^2}\\
    &\leq \sum_{x\in V}\sqrt{\frac{\mu(x)}{2}}\sum_{y,y\sim x}\sqrt{w_{xy}}|\chi_S(x)-\chi_S(y)|\\
    &\leq \sqrt{\frac{D_G^{nor}}{2}}\sum_{x\in V}\sum_{y,y\sim x}w_{xy}|\chi_S(x)-\chi_S(y)|\\
    &=\sqrt{2D_G^{nor}}E(S,V\setminus S).
  \end{align*}
\end{proof}

\begin{proof}[Proof of (\ref{eq:BuserCD0})]
  Using Corollary \ref{coro:l1norm} and Lemma
  \ref{lemma:boundaryMeasure}, we have
$$\sqrt{2t}\Vert\sqrt{\Gamma(\chi_S)}\Vert_1\leq 2\sqrt{D_G^{nor}t}E(S, V\setminus S)$$
and
  \begin{align}
    &\sqrt{2t}\Vert\sqrt{\Gamma(\chi_S)}\Vert_1\notag\\
    \geq&\sum_{x\in V}\mu(x)|\chi_S(x)-P_t\chi_S(x)|=\sum_{x\in S}\mu(x)(1-P_t\chi_S(x))+\sum_{x\in V\setminus S}\mu(x)P_t\chi_S(x)\notag\\
    =&2\mu(S)-2\sum_{x\in S}P_t(\chi_S)(x)\mu(x),\label{eq:tobeInset}
  \end{align}
  where we used Proposition \ref{pro:heatsemigroup} (4-5).

  Now let $\{\alpha_i\}_{i=1}^N$ be $N$ constants such that
  $\chi_S=\sum_{i=1}^N\alpha_i\psi_i$, where $\{\psi_i\}_{i=1}^{N}$
  are the orthonormal basis of $l^2(V,\mu)$ given by eigenfunctions with the choice $\psi_1\equiv 1/\sqrt{\mu(V)}$.
  Then we have $\Vert\chi_S\Vert_2^2=\sum_{i=1}^N\alpha_i^2=\mu(S)$
  and
  \begin{equation*}
    \alpha_1=(\chi_S,\psi_1)=\sum_{x\in V}\mu(x)\chi_S(x)\frac{1}{\sqrt{\mu(V)}}=\frac{\mu(S)}{\sqrt{\mu(V)}}.
  \end{equation*}
  Now we have by Proposition \ref{pro:heatsemigroup} (4)
  \begin{align*}
    &\sum_{x\in S}P_t(\chi_S)(x)\mu(x)\\
=&\sum_{x\in V}\chi_S(x)P_t(\chi_S)(x)\mu(x)=\sum_{i=1}^Ne^{-\lambda_it}\alpha_i^2\leq e^{-\lambda_2t}\sum_{i=2}^N\alpha_i^2+\alpha_0^2\\
    =&e^{-\lambda_2t}\left(\mu(S)-\frac{\mu(S)^2}{\mu(V)}\right)+\frac{\mu(S)^2}{\mu(V)}.
  \end{align*}
  Inserting the above estimate into (\ref{eq:tobeInset}), we arrive at
  \begin{equation}
    2\sqrt{D_G^{nor}t}E(S, V\setminus S)\geq 2\left(\mu(S)-\frac{\mu(S)^2}{\mu(V)}\right)(1-e^{-\lambda_2t}).
  \end{equation}
  Taking $t=\frac{1}{\lambda_2}$, we obtain for those $S$ with
  $\mu(S)\leq \frac{1}{2}\mu(V)$,
  \begin{equation*}
    2\sqrt{\frac{D_G^{nor}}{\lambda_2}}E(S, V\setminus S)\geq \mu(S)(1-e^{-1}).
  \end{equation*}
  This completes the proof.
\end{proof}

\subsection{CD-inequalities of dumbbell
  graphs}\label{section:appendixDumbell}

In this subsection we present the calculations for the CD-inequalities
of dumbbell graphs $G_N$ claimed in Example
\ref{example:dumbbell}. They are modified from that of
\cite[Proposition 3]{JostLiu14}.

A general formula representing $\Gamma_2(f)$ is given by
\begin{align}
  \Gamma_2(f)(x)=& Hf(x)+\frac{1}{2}(\Delta f(x))^2-\frac{1}{2}\frac{\sum_{y,y\sim x}w_{xy}}{\mu(x)}\Gamma(f)(x)\notag\\
  &-\frac{1}{4}\frac{1}{\mu(x)}\sum_{y,y\sim
    x}w_{xy}(f(y)-f(x))^2\frac{\sum_{z,z\sim
      y}w_{yz}}{\mu(y)},\label{eq:gamma2general}
\end{align}
where
\begin{equation*}
Hf(x):=\frac{1}{4}\frac{1}{\mu(x)}\sum_{y,y\sim x}\frac{w_{xy}}{\mu(y)}\sum_{z,z\sim y}w_{yz}(f(x)-2f(y)+f(z))^2.
\end{equation*}
This is an extension of \cite[(2.9)]{JostLiu14} to our general
setting $(G,\mu)$.

Let us first consider the case of the unweighted normalized
Laplacian. Let $x$ be a vertex of $G_N$ which is different from $y_0$
or $y'_0$ (see Figure \ref{F3}). First observe that
\begin{equation*}
  Hf(x)\geq \frac{1}{4N(N-1)}\sum_{y,y\sim x}\sum_{\substack{z,z\sim y\\z\neq y'_0}}(f(x)-2f(y)+f(z))^2.
\end{equation*}
Now our calculations reduce to the complete graph $\mathcal{K}_N$
itself. Note that when $y,z\neq x$,
\begin{align*}
  &(f(x)-2f(y)+f(z))^2+(f(x)-2f(z)+f(y))^2\\
  =&(f(x)-f(y))^2+(f(x)-f(z))^2+4(f(y)-f(z))^2.
\end{align*}
Then we have
\begin{align*}
  Hf(x)\geq
  \frac{N+2}{2N}\Gamma(f)(x)+\frac{1}{N(N-1)}\sum_{(y,z)}(f(y)-f(z))^2,
\end{align*}
where the second summation is over all unordered pair of neighbors of
$x$. By (\ref{eq:gamma2general}), we arrive at
\begin{equation*}
  \Gamma_2(f)(x)\geq \frac{2-N}{2N}\Gamma(f)(x)+\frac{1}{2}\left(\Delta f(x)\right)^2+\frac{1}{N(N-1)}\sum_{(y,z)}(f(y)-f(z))^2.
\end{equation*}
The last two terms above can be further manipulated as follows,
\begin{align*}
  &\frac{1}{2(N-1)^2}\left(\sum_{y,y\sim x}(f(y)-f(x))\right)^2+\frac{1}{N(N-1)}\sum_{(y,z)}(f(y)-f(z))^2\\
  \geq &\frac{1}{N(N-1)}\Bigg[\frac{1}{2}\sum_{y,y\sim x}(f(y)-f(x))^2-\sum_{(y,z)}(f(y)-f(x))(f(z)-f(x))\\
  &\hspace{2cm}+\sum_{(y,z)}\left((f(y)-f(x))^2+(f(z)-f(x))^2\right)\Bigg]\\
  =&\frac{1}{N(N-1)}\Bigg[\left(\frac{1}{2}+\frac{N-2}{2}\right)\sum_{y,y\sim x}(f(y)-f(x))^2+\frac{1}{2}\sum_{(y,z)}(f(y)-f(z))^2\Bigg]\\
  \geq&\frac{N-1}{N}\Gamma(f)(x).
\end{align*}
In the equality above, we use the facts that
\begin{align*}&\frac{1}{2}\sum_{(y,z)}\left((f(y)-f(x))^2+(f(z)-f(x))^2\right)-\sum_{(y,z)}(f(y)-f(x))(f(z)-f(x))\\
=&\frac{1}{2}\sum_{(y,z)}(f(y)-f(z))^2
\end{align*}
and
\begin{align*}&\frac{1}{2}\sum_{(y,z)}\left((f(y)-f(x))^2+(f(z)-f(x))^2\right)\\
=&\frac{1}{4}\sum_{y,y\sim x}\sum_{z,z\sim x, z\neq y}(f(y)-f(x))^2+\frac{1}{4}\sum_{z,z\sim x}\sum_{y,y\sim x, y\neq z}(f(z)-f(x))^2\\
=&\frac{N-2}{2}\sum_{y,y\sim x}(f(y)-f(x))^2.
\end{align*}
Therefore we have $\Gamma_2(f)(x)\geq\frac{1}{2}\Gamma(f)(x)$. That
is, $G_N$ satisfies $CD\left(\frac{1}{2},\infty\right)$ at any vertex
$x\neq y_0, y_0'$.

\begin{remark}
  We note that this CD-inequality at vertex $x$ still holds even if we
  attach different graphs to every vertex in $\mathcal{K}_N$ other
  than $x$ via single edges.
\end{remark}

At $y_0$, $CD(0,\infty)$ does not hold. Let $f_0$ be the function
taking the value $1$ at $y_0'$, $2$ at all other vertices in
$\mathcal{K}_N'$, and $0$ at all vertices in $\mathcal{K}_N$. Then one
can check by (\ref{eq:gamma2general})
that
$$\Gamma_2(f_0)(y_0)=\frac{3-N}{2N^2}<0, \,\,\,\text{if }N \geq 4.$$
In the case $N=3$, we can use another function $g_0$ taking the value
$1$ at $y_0$, $-1$ at all other vertices in $\mathcal{K}_3$, $4$ at
$y_0'$, and $7$ at other vertices in $\mathcal{K}_3'$. One can then
check directly that $\Gamma_2(g_0)(y_0)=-\frac{1}{9}<0$.

For the case of the unweighted non-normalized Laplacian, the
calculations are similar. Note in this case at $x\neq y_0,y_0'$, we
have
\begin{align*}
  \Gamma_2(f)(x)=&Hf(x)+\frac{1}{2}\left(\Delta f(x)\right)^2-\frac{d_x}{2}\Gamma(f)(x)-\frac{1}{4}\sum_{y,y\sim x}(f(y)-f(x))^2d_y\\
  \geq &Hf(x)+\frac{1}{2}\left(\Delta f(x)\right)^2-N\Gamma(f)(x).
\end{align*}
Carrying out the calculation in the same way as in the normalized case
we finally conclude $\Gamma_2(f)(x)\geq \frac{N}{2}\Gamma(f)(x)$. The
arguments for CD-inequalities at $y_0, y_0'$ can be done with the same
special functions as in the normalized case.

\section*{Acknowledgements}
We thank Frank Bauer for valuable comments about Buser's inequality on
graphs in \cite{BHLLMY13} and \cite{KlarKozma}. We are also grateful
to the anonymous referees for many useful suggestions for improvement
of this article. This work was supported by the EPSRC Grant
EP/K016687/1 "Topology, Geometry and Laplacians of Simplicial
Complexes".

\end{document}